\let\OLDthebibliography\thebibliography
\renewcommand\thebibliography[1]{
	\OLDthebibliography{#1}
	\setlength{\parskip}{0pt}
	\setlength{\itemsep}{2pt} 
}
\let\emptyset\varnothing
\numberwithin{equation}{section}
\newtheorem{theorem}{Theorem}[section]
\newtheorem{proposition}[theorem]{Proposition}
\newtheorem{lemma}[theorem]{Lemma}
\newtheorem{corollary}[theorem]{Corollary}
\newtheorem{conj}[theorem]{Conjecture}
\theoremstyle{definition} 
\newtheorem{eg}[theorem]{Example}
\theoremstyle{remark}
\newtheorem{rem}[theorem]{Remark}
\newcommand{\beq}{\begin{equation}}
\newcommand{\eeq}{\end{equation}}
\newcommand{\be}{\begin{equation*}}
\newcommand{\ee}{\end{equation*}}
\newcommand{\bs}{\boldsymbol}
\newcommand{\C}{\mathbb{C}}
\newcommand{\Z}{\mathbb{Z}}
\newcommand{\mc}{\mathcal}
\newcommand{\gl}{\mathfrak{gl}}
\newcommand{\h}{\mathfrak{h}}
\newcommand{\fkT}{\mathfrak{T}}
\newcommand{\End}{\mathrm{End}}
\newcommand{\pa}{\partial}
\newcommand{\tl}{\tilde}
\newcommand{\gge}{\geqslant}
\newcommand{\lle}{\leqslant}
\newcommand{\la}{\lambda}
\newcommand{\glMN}{\mathfrak{gl}_{m|n}}
\newcommand{\UglMN}{\mathrm{U}(\mathfrak{gl}_{m|n})}
\newcommand{\Yone}{\mathrm{Y}(\mathfrak{gl}_{1|1})}
\newcommand{\YglMN}{\mathrm{Y}(\mathfrak{gl}_{m|n})}
\newcommand{\ka}{\kappa}
\newcommand{\qedd}{\tag*{$\square$}}
\newcommand{\s}{{\bm s}}
\begin{document}
\pagestyle{myheadings}
\setcounter{page}{1}

\title{\sf A note on odd reflections of super Yangian and Bethe ansatz}

\author{Kang Lu}
\date{}

\maketitle
\vspace{-1.2cm}
\begin{center}
Department of Mathematics, University of Denver,\\
 2390 S. York St., Denver, CO 80208, USA
\\
{\sf Kang.Lu@du.edu}
\end{center}
\begin{abstract} 
In a recent paper \cite{M:2021}, Molev introduced analogues of the odd reflections for the super Yangian $\YglMN$ and obtained a transition rule for the change of highest weights when the parity sequence is altered. In this note, we reproduce the results from a different point of view and discuss their relations with the fermionic reproduction procedure of the XXX-type Bethe ansatz equations introduced in \cite{HLM:2019}. We give an algorithm that how the $q$-characters change under the odd reflections. We also take the chance to compute explicitly the $q$-characters of skew representations of $\YglMN$ for arbitrary parity sequences.

		\medskip 
		
		\noindent
		{\bf Keywords:} super Yangians, odd reflections, Bethe ansatz, XXX spin chains		
\end{abstract}


\thispagestyle{empty}

\section{Introduction}
The super Yangians $\YglMN$ associated with general Lie superalgebra $\glMN$ were introduced back to \cite{Naz:1991} in the beginning of 1990s and their finite-dimensional irreducible representations were classified in \cite{Zh:1995,Zh:1996}. There is a considerable renewed interest to super Yangians \cite{Gow:2007,P:2016,Tsy:2020}, their representations \cite{Stu:2013,Jan:2016,LM:2020,L:2021a,M:2021}, and their relations to $\mathcal W$-algebras \cite{P:2021} and Bethe ansatz \cite{BR:2008,HLM:2019,LM:2020,LM:2019}. 

Comparing to the even case, a remarkable feature of Lie superalgebras is that there are non-conjugate Borel subalgebras which can be parameterized by the parity sequences for the case of $\glMN$. In the recent paper \cite{M:2021}, Molev introduced the odd reflections for the super Yangian $\YglMN$ and derived a transition rule for the change of the highest weight for a finite-dimensional irreducible $\YglMN$-module when the parity sequence is altered by switching two neighbors.

In this note, we reproduce the results of \cite{M:2021} from a different point of view by using Drinfeld-type generators. The results of this note were obtained in 2020 Summer after the completion of \cite{LM:2020}. Our motivation to study the odd reflections of super Yangian comes from the study of XXX Bethe ansatz equations (BAE for short), see \cite{HLM:2019}. Here, besides the parameters for roots of BAE, the BAE depend also on the parity sequence and the highest weight of the corresponding finite-dimensional irreducible $\YglMN$-module for that parity sequence $\s$, see \eqref{eq:bae}. The odd reflection for BAE is the \emph{fermionic reproduction procedure} introduced in \cite{HMVY:2019,HLM:2019} which given a solution of BAE associated to one parity sequence constructs a solution of BAE associated to another different parity sequence $\tl \s$ obtained by switching two neighbors. These statements are obtained for the case when the finite-dimensional irreducible $\YglMN$-modules are tensor products of evaluation modules since one knows explicitly the highest weights for other parity sequences by the known results from representation theory of $\glMN$. Then one expects that such a procedure should work for arbitrary finite-dimensional irreducible $\YglMN$-modules. To obtain that, it is natural to ask how does the highest weight changes under the odd reflections. However, one can also proceed on the other way around. Assuming these statements for arbitrary finite-dimensional irreducible $\YglMN$-modules and using the same procedure, one can obtain the formulas of the ``highest weights" of the module for the parity sequence $\tl \s$ in terms of the known highest weights of the same module for the parity sequence $\s$, see Section \ref{sec:bae}. Now it suffices to show this is indeed the ``highest weights" for the parity sequence $\tl \s$.

The idea to prove this transition rule for $\Yone$ is actually simple since all finite-dimensional irreducible $\Yone$-modules are tensor products of evaluation modules up to some one-dimensional $\Yone$-module. The highest weight vector for the parity sequence $\tl\s$ is simply the lowest weight vector for the parity sequence $\s$. Moreover, the action of $t_{ii}(u)$ on the lowest weight vector can be easily computed and hence we obtain the transition rule for the $\Yone$ case, see Theorem \ref{thm:11}. The case for $\YglMN$ is essentially reduced to the $\Yone$ case, see Theorem \ref{thm:mn}. Instead of working on RTT generators, we proceed this idea using the Drinfeld current generators, see Section \ref{sec:odd}.

Note that the subalgebra (called Gelfand-Tsetlin subalgebra) generated by the coefficients of the Cartan current generating series depends on the parity sequence. Therefore, for the same $\YglMN$-module, its $q$-character also depends on the choice of parity sequences. We give an algorithm of computing the $q$-characters after the odd reflections in Section \ref{sec:alg}.

We take the opportunity to study the skew representations for super Yangian with arbitrary parity sequence. We give explicitly their $q$-characters, see Theorem \ref{thm character} and show that skew representations as $\YglMN$-modules are independent of the parity sequences, see Proposition \ref{prop iso}, in Section \ref{sec skew}.

The paper is constructed as follows. We start by organizing known facts about the general linear Lie superalgebra $\glMN$ and the super Yangian $\YglMN$ for an arbitrary parity sequence in Section \ref{sec:super yangian}. In Section \ref{sec more}, we recall the quasi-determinant presentation of Drinfeld current generators and study the relations between Drinfeld current generators of different parity sequences. The transition rule for highest weights of representations of super Yangians and its relation to the Bethe ansatz are discussed in Section \ref{sec:odd}. Section \ref{sec skew} is devoted to the study of $\YglMN$-modules related to skew Young diagrams. 

\medskip

{\bf Acknowledgments.} The author thanks Evgeny Mukhin,  Yung-Ning Peng, and Vitaly Tarasov for stimulating discussions.

\section{Super Yangian $\YglMN$}\label{sec:super yangian}
\subsection{Lie superalgebra $\glMN$}\label{sec glmn}Throughout the paper, we work over $\C$. In this section, we recall the basics of the Lie superalgebra $\glMN$, see e.g. \cite{CW:2012} for more detail.

A \emph{vector superspace} $W = W_{\bar 0}\oplus W_{\bar 1}$ is a $\Z_2$-graded vector space. We call elements of $W_{\bar 0}$ \emph{even} and elements of
$W_{\bar 1}$ \emph{odd}. We write $|w|\in\{\bar 0,\bar 1\}$ for the parity of a homogeneous element $w\in W$. Set $(-1)^{\bar 0}=1$ and $(-1)^{\bar 1}=-1$.

Fix $m,n\in \Z_{\gge 0}$. Set $I:=\{1,2,\dots,m+n-1\}$ and $\bar I:=\{1,2,\dots,m+n\}$.  

Denote by $S_{m|n}$ the set of all sequences $\bm s=(s_{1},s_2,\dots,s_{m+n})$ where $s_i\in\{\pm1\}$ and $1$ occurs exactly $m$ times. Elements of $S_{m|n}$ are called \emph{parity sequences}. The parity sequence of the form $\bm{s_0}=(1,\dots,1,-1,\dots,-1)$ is the \emph{standard parity sequence}.  

Fix a parity sequence $\s\in S_{m|n}$ and define $|i|\in \Z_2$ for $i\in \bar I$ by $s_i=(-1)^{|i|}$.

The Lie superalgebra $\glMN^\s$ is generated by elements $e_{ij}^\s$, $i,j\in \bar I$, with the supercommutator relations
\[
[e^\s_{ij},e^\s_{kl}]=\delta_{jk}e^\s_{il}-(-1)^{(|i|+|j|)(|k|+|l|)}\delta_{il}e^\s_{kj},
\]
where the parity of $e_{ij}^\s$ is $|i|+|j|$. In the following, we shall drop the superscript $\s$ when there is no confusion.

Denote by $\UglMN$ the universal enveloping superalgebra of $\glMN$. The superalgebra $\UglMN$ is a Hopf superalgebra with the coproduct given by $\Delta(x)=1\otimes x+x\otimes 1$ for all $x\in \glMN$. 

The \emph{Cartan subalgebra $\h$} of $\glMN$ is spanned by $e_{ii}$, $i\in \bar I$. Let $\epsilon_i$, $i\in \bar I$, be a basis of $\h^*$ (the dual space of $\h$) such that $\epsilon_i(e_{jj})=\delta_{ij}$. There is a bilinear form $(\ ,\ )$ on $\h^*$ given by $(\epsilon_i,\epsilon_j)=s_i\delta_{ij}$. The \emph{root system $\bf{\Phi}$} is a subset of $\h^*$ given by
\[
{\bf \Phi}:=\{\epsilon_i-\epsilon_j~|~i,j\in \bar I \text{ and }i\ne j\}.
\]
We call a root $\epsilon_i-\epsilon_j$ \emph{even} (resp. \emph{odd}) if $|i|=|j|$ (resp. $|i|\ne |j|$).

Set $\alpha_i:=\epsilon_i-\epsilon_{i+1}$ for $i\in I$. Denote by ${\bf P}:=\oplus_{i\in \bar I}\Z \epsilon_i$, ${\bf Q}:=\oplus_{i\in I}\Z \alpha_i$, and ${\bf Q}_{\gge 0}:=\oplus_{i\in I}\Z_{\gge 0} \alpha_i$ the \emph{weight lattice}, the \emph{root lattice}, and the \emph{cone of positive roots}, respectively. Define a partial ordering $\gge$ on $\h^*$: $\mu\gge \nu$ if $\mu-\nu\in {\bf Q}_{\gge 0}$.

A module $M$ over a superalgebra $\mathcal A$ is a vector superspace $M$ with a homomorphism of superalgebras $\mathcal A\to \End(M)$. A $\glMN$-module is a module over $\mathrm{U}(\glMN)$. However, we shall not distinguish modules which are only differ by a parity.

For a $\glMN$-module $M$, we call a vector $v\in M$ \emph{singular} if $e_{ij}v=0$ for $1\lle i<j\lle m+n$. We call a nonzero vector $v\in M$ a \emph{singular vector of weight} $\mu$ if $v$ satisfies
\[
e_{ii}v=\mu(e_{ii})v,\quad e_{jk}v=0,
\]
for $i\in \bar I$ and $1\lle j< k\lle m+n$. Denote by $L(\mu)$ the irreducible $\glMN$-module generated by a 
singular vector of weight $\mu$. 

For a $\glMN$-module $M$, define the \emph{weight subspace of weight} $\mu$ by
\beq\label{eq:uweight-space}
(M)_{\mu}:=\{v\in M\ |\ e_{ii}v=\mu(e_{ii})v,\ i\in \bar I\}.
\eeq
For a $\glMN$-modules $M$ such that $(M)_{\mu}=0$ unless $\mu\in\bf P$, we say that $M$ is \emph{$\bf P$-graded}. 

Let $V:=\C^{m|n}$ be the vector superspace with a basis $v_i$, $i\in \bar I$, such that $|v_i|=|i|$. Let $E_{ij}\in\End(V)$ be the linear operators such that $E_{ij}v_k=\delta_{jk}v_i$. The map $\rho_V:\glMN\to \End(V),\ e_{ij}\mapsto E_{ij}$ defines a $\glMN$-module structure on $V$. As a $\glMN$-module, $V$ is isomorphic to $L({\epsilon_1})$. The vector $v_i$ has weight $\epsilon_i$. The highest weight vector is $v_1$ and the lowest weight vector is $v_{m+n}$. We call it the \emph{vector representation} of $\glMN$.

Fix $m',n'\in\Z_{\gge 0}$ and a parity sequence $\s'\in S_{m'|n'}$. Let $\s'\s$ be the parity sequence in $S_{m'+m|n'+n}$ obtained by concatenating $\s'$ and $\s$. Consider the Lie superalgebra $\gl_{m'|n'}^{\s'}$ and a larger Lie superalgebra $\gl_{m'+m|n'+n}^{\s'\s}$. 

Clearly, we have the embeddings of Lie superalgebras given by
\[
\gl_{m'|n'}^{\s'}\hookrightarrow\gl_{m'+m|n'+n}^{\s'\s},\quad e_{ij}^{\s'}\mapsto e_{ij}^{\s'\s},
\]
\[
\gl_{m|n}^{\s}\hookrightarrow\gl_{m'+m|n'+n}^{\s'\s},\quad e_{ij}^{\s}\mapsto e_{m'+n'+i,m'+n'+j}^{\s'\s}.
\]

\subsection{Hook partitions, skew Young diagrams, and polynomial modules}\label{sec:hook-p}
Let $\la=(\la_1\gge \la_2\gge \cdots)$ be a partition of $l$: $\la_i\in\Z_{\gge 0}$, $\la_i=0$ if $i\gg 0$, and $|\la|:=\sum_{i=1}^\infty \la_i=l$. We denote by $\la'$ the \emph{conjugate} of the partition $\la$. The number $\la_1'$ is the length of the partition $\la$, namely the number of nonzero parts of $\la$. Let $\mu=(\mu_1\gge \mu_2\gge \cdots)$ be another partition such that $\mu_i\lle \la_i$ for all $i=1,2,\dots$. Consider the \emph{skew Young diagram} $\la/ \mu$ which is defined as the set of pairs
\[
\{(i,j)\in\Z^2~|~i\gge 1,\ \la_i\gge j > \mu_i\}.
\]
When $\mu$ is the zero partition, then $\la/ \mu$ is the usual Young diagram corresponding to $\la$.

We use the standard representation of skew Young diagrams on the coordinate plane $\mathbb R^2$ with coordinates $(x,y)$. Here we use the convention that $x$ increases from north to south while $y$ increases from west to east. Moreover, the pair $(i,j)\in \la/ \mu$ is represented by the unit box whose south-eastern corner has coordinate $(i,j)\in\mathbb Z^2$. We also define the \emph{content} of the box corresponding to $(i,j)\in \la/ \mu$ by $c(i,j)=j-i$. 

Consider the alphabet set $A:=\{1,\dots,m,\underline 1,\dots,\underline n\}$ with a partial order given by $1<\dots<m$ and $\underline 1<\dots<\underline n$. Given a parity sequence $\s\in S_{m|n}$, define the elements $\mathfrak a_i\in A$, $i=1,\dots,m+n$, by the rule:
\begin{itemize}
\item  $\mathfrak a_i\in \{1,\dots,m\}$ if $s_i=1$ and $\mathfrak a_i\in \{\underline 1,\dots,\underline n\}$ if $s_i=-1$;
\item and $\mathfrak a_i<\mathfrak a_j$ if $s_i=s_j$ and $i<j$.
\end{itemize}
Extend the order $<$ to a total order $<^{\s}$ on $A$ by setting $\mathfrak a_i<^\s \mathfrak a_j$ if $i<j$.

\begin{eg}
If $\s=(1,-1,-1,1,1)$, then we have $1<^\s \underline 1<^\s\underline 2<^\s2<^\s3$.\qed
\end{eg}

A \textit{semi-standard Young $\s$-tableau} of shape $\la/\mu$ is the skew Young diagram $\la/\mu$ with an element from $A=\{1,\dots,m,\underline 1,\dots,\underline n\}$ inserted in each box such that the following conditions with respect to the order $<^\s$ are satisfied :
\begin{enumerate}
    \item the elements in boxes are weakly increasing along rows and columns;
    \item the elements from $\{1,2,\dots,m\}$ are strictly increasing along columns;
    \item the elements from $\{\underline 1,\dots,\underline n\}$ are strictly increasing along rows.
\end{enumerate}
For a semi-standard Young $\s$-tableau $\mathcal T$ of shape $\la/\mu$, set $\mathcal T(i,j)=k$ if the element $\mathfrak a_k$ is in the box representing the pair $(i,j)\in \la/\mu$. 
\begin{eg}
Let $\la=(5,3,3,3,3)$, $\mu=(3,3,2,2)$, $m=n=2$, and $\s=(1,-1,1,-1)$, then the skew Young diagram $\la/\mu$ has the shape as one of the following. 
$$
\begin{ytableau}
\none & \none & \none & 1 & 1\\
 \none & \none & \none\\
 \none & \none & \underline 1\\
 \none & \none & \underline 1\\
 \underline 1 & 2 & \underline 2	
\end{ytableau}
\qquad\qquad \qquad\qquad\qquad
\begin{ytableau}
\none & \none & \none & 3 & 4\\
 \none & \none & \none\\
 \none & \none & 0\\
 \none & \none & -1\\
 -4 & -3 & -2	
\end{ytableau}
$$
In the picture above, the left one is an example of a semi-standard Young $\s$-tableau of shape $\la/\mu$. We have $\mc T(1,4)=\mc T(1,5)=1$, $\mc T(3,3)=\mc
T(4,3)=\mc T(5,1)=2$, $\mc T(5,2)=3$, and $\mc T(5,3)=4$. In the right, we wrote in each box its content.  \qed
\end{eg}

Recall that $V=\C^{m|n}$ denotes the vector representation of $\gl_{m|n}$.
A $\gl_{m|n}$-module is called a \emph{polynomial module} if it is a submodule of $V^{\otimes l}$ for some $l\in\Z_{\gge 0}$. We call a partition $\la$ an $(m|n)$-{\it hook partition} if $\la_{m+1}\lle n$. Let $\mathscr P_l(m|n)$ be the set of all $(m|n)$-hook partitions of $l$ and $\mathscr P(m|n)$ the set of all $(m|n)$-hook partitions. In particular, $\mathscr P_l(m):=\mathscr P_l(m|0)$ is the set of all partitions of $l$ with length $\lle m$. It is well-known that irreducible polynomial $\gl_{m|n}$-modules are parameterized by $\mathscr P(m|n)$.

Given $\s\in S_{m|n}$, let $a_i$ (resp. $b_j$) be the number of $-1$'s (resp. $1$'s) appearing before the $i$-th $1$ (resp. $j$-th $-1$) in the parity sequence $\s$, where $1\lle i\lle m$ and $1\lle j\lle n$.

For $\la\in\mathscr P(m|n)$, define the $\gl_{m|n}$-weight $\la^\s$ by
\beq\label{eq natural}
\la^\s=\sum_{i=1}^m \max\{\la_i-a_i,0\}\epsilon_{a_i+i}+\sum_{j=1}^n \max\{\la_j'-b_j,0\}\epsilon_{b_j+j}.
\eeq

Let $\mc P\in \End(V \otimes V)$ be the super flip operator,
\beq\label{eq:P}
\mathcal P=\sum_{i,j\in \bar I} s_jE_{ij}\otimes E_{ji}.
\eeq
Let $\mathfrak S_l$ be the symmetric group permuting $\{1,2,\dots,l\}$. The symmetric group $\mathfrak S_l$ acts naturally on $V^{\otimes l}$, where the simple transposition $\sigma_k=(k,k+1)$ acts as 
\beq\label{eq perm}
\mathcal P^{(k,k+1)}=\sum_{i,j\in \bar I} s_j E_{ij}^{(k)}E_{ji}^{(k+1)}\in \End(V^{\otimes l}),
\eeq
where we use the standard notation
\[
E_{ij}^{(k)}=1^{\otimes (k-1)}\otimes E_{ij}\otimes 1^{\otimes (l -k)}\in \End(V^{\otimes l}),\qquad k=1,\dots,l.
\]

Let $\mathcal S(\la)$ be the finite-dimensional irreducible representation of $\mathfrak S_l$ corresponding to the partition $\la$.
\begin{theorem}[Schur-Sergeev duality \cite{Ser:1985}]\label{thm schur}
The $\mathfrak S_l$-action and $\gl_{m|n}$-action on $V^{\otimes l}$ commute. Moreover, as a $\mathrm{U}(\glMN)\otimes \C[\mathfrak S_l]$-module, we have
\[
V^{\otimes l}\cong\bigoplus_{\la\in\mathscr P_l(m|n)}L(\la^\s)\otimes \mathcal S(\la).\qedd
\]	
\end{theorem}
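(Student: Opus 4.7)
The plan is to deduce both parts from the classical Schur-Sergeev duality for the standard parity sequence $\s_0=(1,\dots,1,-1,\dots,-1)$, due to Sergeev, and then track the dependence on $\s$. The argument naturally splits into three steps: commutativity of the two actions, the isotypic decomposition, and the identification of the highest weights $\la^\s$.

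For commutativity, the key point is that the symmetric group acts via the \emph{super} flip (\ref{eq:P}), whose signs $s_j$ are tailored to the super signs appearing in the coproduct action of $\glMN^\s$. On $V\otimes V$, $e_{ij}$ acts as $E_{ij}\otimes 1+1\otimes E_{ij}$ with the super tensor convention inserting $(-1)^{(|i|+|j|)|v|}$ in the second summand when acting on a homogeneous tensor; a direct sign-tracking calculation shows this commutes with $\mathcal P$. Since adjacent transpositions generate $\mathfrak S_l$, commutativity on all of $V^{\otimes l}$ follows.

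For the decomposition, I would first invoke Sergeev's theorem for $\s_0$, proved via the double centralizer theorem: the images of $\C[\mathfrak S_l]$ and $\UglMN$ in $\End(V^{\otimes l})$ are mutual centralizers, and semisimplicity of $\C[\mathfrak S_l]$ forces the claimed direct sum, with the $(m|n)$-hook condition $\la_{m+1}\leqslant n$ arising from the weight-space dimensions of $V$. For a general $\s$, the double centralizer argument applies verbatim since all parity dependence is absorbed into the $\Z_2$-grading on $V$ and the super signs in the coproduct; alternatively, relabeling the ordered basis of $V$ identifies the $\s$- and $\s_0$-module structures on $V^{\otimes l}$ as $\C[\mathfrak S_l]\otimes\UglMN$-modules and transports the decomposition.

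The identification of the highest weights $\la^\s$ is the step I expect to be the main obstacle. I would approach it combinatorially using the semi-standard Young $\s$-tableaux from Section \ref{sec:hook-p}: for each $\la\in\mathscr P_l(m|n)$, the unique (up to scalar) $\s$-highest weight vector in the $\la$-isotypic summand of $V^{\otimes l}$ corresponds to the Young $\s$-tableau of shape $\la$ whose boxes are filled greedily with the $<^\s$-smallest admissible letters, subject to the column-strict/row-strict conditions for even/odd entries. Tallying the $\epsilon$-contributions of this tableau reproduces (\ref{eq natural}): the truncation $\max\{\la_i-a_i,0\}$ records how the hook condition forces odd letters to fill the initial $a_i$ boxes of row $i$ before the $i$-th even letter (sitting at position $a_i+i$ in $<^\s$) can appear, with the symmetric statement for columns giving the $\max\{\la_j'-b_j,0\}$ term. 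An inductive alternative would be to pass from $\s_0$ to $\s$ by a sequence of adjacent swaps and apply the standard odd reflection rules for $\glMN$-highest weights at each step, verifying that (\ref{eq natural}) transforms correctly under switching two neighbors in $\s$.
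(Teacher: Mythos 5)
The paper does not prove Theorem \ref{thm schur} at all: it is quoted from \cite{Ser:1985}, with the arbitrary-$\s$ highest-weight formula \eqref{eq natural} belonging to the hook Schur function circle of ideas \cite{BR:1987,CPT:2015,CW:2012}. So your proposal cannot be matched against an argument in the text; judged on its own, it is the standard route and the first two steps are sound. Commutativity is indeed a one-line Koszul-sign check, since the signs $s_j$ in \eqref{eq:P} are exactly the signs of the flip in the category of super vector spaces; and transporting Sergeev's double-centralizer theorem from $\s_0$ to a general $\s$ by relabelling the ordered basis of $V$ is legitimate, because the relabelling is simultaneously an isomorphism of $\mathfrak S_l$-modules and an intertwiner between the $\glMN^{\s_0}$- and $\glMN^{\s}$-actions, so the isotypic decomposition and irreducibility of the summands carry over verbatim.

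The genuine gap is that Step 3 --- the only step with real content beyond \cite{Ser:1985} --- is a plan rather than a proof. Two things are asserted but not verified. First, that the ``greedy'' filling is well defined and semistandard: the filling you describe puts the odd letter $\underline j$ in the boxes $(b_j+1,j),\dots,(\la'_j,j)$ of column $j$ and the even letter $i$ in the boxes $(i,a_i+1),\dots,(i,\la_i)$ of row $i$, and for this to tile $\la$ with each box covered exactly once you need the complementarity $j\lle a_i \Leftrightarrow b_j<i$ (both say that the $j$-th $-1$ precedes the $i$-th $1$ in $\s$); the same identity is what makes the filling weakly increasing along rows and columns and strict where required, and what shows every box of an $(m|n)$-hook diagram is covered. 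Once this is in place, reading off the multiplicities gives precisely $\max\{\la_i-a_i,0\}$ copies of $\epsilon_{a_i+i}$ and $\max\{\la'_j-b_j,0\}$ copies of $\epsilon_{b_j+j}$, i.e.\ \eqref{eq natural}. Second, you must argue that this tableau's weight is the highest weight for the Borel attached to $\s$, e.g.\ by checking that it is the unique semistandard $\s$-tableau of its weight and that its weight is maximal for the partial order $\gge$, or more directly that the corresponding weight vector is annihilated by all $e_{jk}$ with $j<k$. Your alternative via odd reflections is arguably cleaner here: the transition rule for $\glMN$ highest weights under switching two adjacent parities is classical, and verifying that \eqref{eq natural} transforms correctly under a single adjacent swap is a short finite check; either route closes the gap, but one of them has to be carried out.
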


\subsection{Super Yangian $\YglMN$}\label{sec rtt}
We recall the definition of the super Yangian $\YglMN$ from \cite{Naz:1991}. 

The super Yangian $\YglMN$ (write as $\mathrm{Y}(\gl_{m|n}^\s)$ if we want to indicate the dependence on $\s$) is the $\Z_2$-graded unital associative algebra over $\C$ with generators $\{t_{ij}^{(r)}\ |\ i,j\in \bar I, \, r\gge 1\}$ and defining relations
\beq\label{eq:comm-generators}
[t_{ij}^{(r)},t_{kl}^{(s)}]=(-1)^{|i||j|+|i||k|+|j||k|}\sum_{a=0}^{\min(r,s)-1}(t_{kj}^{(a)}t_{il}^{(r+s-1-a)}-t_{kj}^{(r+s-1-a)}t_{il}^{(a)}),
\eeq
where the generators $t_{ij}^{(r)}$ have parities $|i|+|j|$. We call $t_{ij}^{(r)}$ \emph{RTT generators} of $\YglMN$.

The super Yangian $\YglMN$ has the RTT presentation as follows. Define the rational R-matrix $R(u)\in \End(V \otimes  V)$ by $R(u)=1-\mathcal P/u$, where $\mc P$ is defined in \eqref{eq:P}. The rational R-matrix satisfies the quantum Yang-Baxter equation
\beq\label{eq yang-baxter}
R_{12}(u_1-u_2)R_{13}(u_1-u_3)R_{23}(u_2-u_3)=R_{23}(u_2-u_3)R_{13}(u_1-u_3)R_{12}(u_1-u_2).
\eeq
Define the generating series
\[
t_{ij}(u)=\delta_{ij}+\sum_{k=1}^\infty t_{ij}^{(k)}u^{-k}
\]
and the operator $T(u)\in \End(V)\otimes\YglMN[[u^{-1}]] $,
\[
T(u)=\sum_{i,j\in \bar I} (-1)^{|i||j|+|j|}E_{ij}\otimes t_{ij}(u).
\]
Denote by
\beq\label{eq matrix notation}
T_k(u)=\sum_{i,j\in \bar I} (-1)^{|i||j|+|j|}E_{ij}^{(k)}\otimes t_{ij}(u)\in\End(V^{\otimes l}) \otimes\YglMN[[u^{-1}]],\quad k=1,2.
\eeq
Then defining relations \eqref{eq:comm-generators} can be written as
\[
R(u_1-u_2)T_1(u_1)T_2(u_2)=T_2(u_2)T_1(u_1)R(u_1-u_2)\in \End(V^{\otimes 2}) \otimes\YglMN[[u^{-1}]].
\]
In terms of generating series, defining relations \eqref{eq:comm-generators} are equivalent to
\beq\label{eq:comm-series}
(u_1-u_2)[t_{ij}(u_1),t_{kl}(u_2)]=(-1)^{|i||j|+|i||k|+|j||k|}(t_{kj}(u_1)t_{il}(u_2)-t_{kj}(u_2)t_{il}(u_1)).
\eeq

The super Yangian $\YglMN$ is a Hopf superalgebra with coproduct given by
\beq\label{eq Hopf}
\Delta: t_{ij}(u)\mapsto \sum_{k\in \bar I} t_{ik}(u)\otimes t_{kj}(u).
\eeq

For $z\in\C$ there exists an isomorphism of Hopf superalgebras,
\begin{align}
&\tau_z:\YglMN\to\YglMN, && t_{ij}(u)\mapsto t_{ij}(u-z).\label{eq tau z}
\end{align}

The universal enveloping superalgebra $\mathrm U(\glMN)$ is a Hopf subalgebra of $\YglMN$ via the embedding $e_{ij}\mapsto s_it_{ij}^{(1)}$. The left inverse of this embedding is the \emph{evaluation homomorphism} $\pi_{m|n}: \YglMN\to \UglMN$ given by
\beq\label{eq:evaluation-map}
\pi_{m|n}: t_{ij}(u)\mapsto \delta_{ij}+s_ie_{ij}u^{-1}.
\eeq

The evaluation homomorphism is an algebra homomorphism but not a Hopf algebra homomorphism.
For any $\glMN$-module $M$, it is naturally a $\YglMN$-module obtained by pulling back $M$ through the evaluation homomorphism $\pi_{m|n}$. We denote the corresponding $\YglMN$-module by the same letter $M$ and call it an \emph{evaluation module}.

\subsection{Gauss decomposition}\label{sec:Gauss}
The Gauss decomposition of $\YglMN$, see \cite{Gow:2007,P:2016,Tsy:2020}, gives generating series
\[
e_{ij}(u)=\sum_{r\gge 1}e_{ij,r}u^{-r},\quad f_{ji}(u)=\sum_{r\gge 1}f_{ji,r}u^{-r},\quad d_k(u)=1+\sum_{r\gge 1}d_{k,r}u^{-r},
\]
where $1\lle i< j\lle m+n$ and $k\in \bar I$, such that
\begin{align*}
t_{ii}(u)&=d_i(u)+\sum_{k<i}f_{ik}(u)d_k(u)e_{ki}(u),\\
t_{ij}(u)&=d_i(u)e_{ij}(u)+\sum_{k<i}f_{ik}(u)d_k(u)e_{kj}(u),\\
t_{ji}(u)&=f_{ji}(u)d_i(u)+\sum_{k<i}f_{jk}(u)d_k(u)e_{ki}(u).
\end{align*}

For $i\in I$ and $k\in \bar I$, let
\[
e_{i}(u)=e_{i,i+1}(u)=\sum_{r\gge 1}e_{i,r}u^{-r},\quad f_{i}(u)=f_{i+1,i}(u)=\sum_{r\gge 1}f_{i,r}u^{-r},
\]
\[
d_{k}'(u)=(d_k(u))^{-1}=1+\sum_{r\gge 1}d_{k,r}'u^{-r}.
\]
We use the convention $d_{k,0}=d_{k,0}=1$.

The parities of $e_{ij,r}$ and $f_{ji,r}$ are the same as that of $t_{ij}^{(r)}$ while all $d_{k,r}$ and $d_{k,r}'$ are even. The super Yangian $\YglMN$ is generated by $e_{i,r}$, $f_{i,r}$, $d_{k,r}$, $d_{k,r}'$, where $i\in I$ and $k\in \bar I$, and $r\gge 1$. We call these generators the \emph{Drinfeld current generators} of $\YglMN$. The full defining relations are described in \cite{Gow:2007,P:2016,Tsy:2020}. Here we only write down the following relations. Let $\phi_i(u)=d_i'(u)d_{i+1}(u)=1+\sum_{r\gge 1}\phi_{i,r}u^{-r}$. Then one has $[d_{i,r}, d_{j,s}]=0$,
\begin{align}
&[d_{i,r}, e_{j,s}]=(\epsilon_i,\alpha_j) \sum_{t=0}^{r-1} d_{i,t} e_{j,r+s-1-t},\quad [d_{i,r}, f_{j,s}]=-(\epsilon_i,\alpha_j)\sum_{t=0}^{r-1} f_{j,r+s-1-t} d_{i,t}, \label{eq com de}\\
&[e_{j,r}, f_{k,s}]=-s_{j+1}\delta_{jk}\sum_{t=0}^{r+s-1} d_{j,t}' d_{j+1,r+s-1-t}=-s_{j+1}\delta_{jk}\phi_{j,r+s-1}.\label{eq com ef}
\end{align}
Moreover, if $s_i\ne s_{i+1}$ for some $i\in I$, then $[e_{i,r},e_{i,s}]=[f_{i,r},f_{i,s}]=0$.

Let $\mathrm{Y}_{m|n}^+$, $\mathrm{Y}_{m|n}^-$, and $\mathrm{Y}_{m|n}^0$ be the subalgebras of $\YglMN$ generated by coefficients of the series $e_i(u)$, $f_i(u)$, and $d_j(u)$, respectively. It is known from \cite{Gow:2007,P:2016,Tsy:2020} that 
\beq\label{eq:tri}
\YglMN\cong \mathrm{Y}_{m|n}^-\otimes \mathrm{Y}_{m|n}^0 \otimes \mathrm{Y}_{m|n}^+
\eeq 
as vector spaces and $d_i^{(r)}$ are algebraically free generators of $\mathrm{Y}_{m|n}^0$.

\subsection{Highest and lowest $\ell$-weight representations}\label{sec high rep}
Recall $\mathcal B=1+u^{-1}\C[[u^{-1}]]$ and set $\mathfrak B:=\mathcal B^{\bar I}$.
We call an element $\bm\zeta\in \mathfrak B$ an \emph{$\ell$-weight}. We write $\ell$-weights in the form $\bm\zeta=(\zeta_i(u))_{i\in \bar I}$, where $\zeta_i(u)\in \mathcal B$ for all $i\in \bar I$.

Clearly $\mathfrak B$ is an abelian group with respect to the point-wise multiplication of the tuples. Let $\Z[\mathfrak B]$ be the group ring of $\mathfrak B$ whose elements are finite $\Z$-linear combinations of the form $\sum a_{\bm \zeta}[\bm\zeta]$, where $a_{\bm\zeta}\in \Z$.

Let $M$ be a $\YglMN$-module. We say that a nonzero $\Z_2$-homogeneous vector $v\in M$ is \emph{of $\ell$-weight $\bm \zeta$} if $d_{i}(u)v=\zeta_i(u)v$ for $i\in \bar I$. We say that a vector $v\in M$ of $\ell$-weight $\bs\zeta$ is a \emph{highest (resp. lowest) $\ell$-weight vector of $\ell$-weight $\bs\zeta$} if $e_{ij}(u)v=0$ (resp. $f_{ji}(u)v=0$) for all $1\lle i< j\lle m+n$ and $v$ is homogeneous. The module $M$ is called a \emph{highest (resp. lowest) $\ell$-weight module of $\ell$-weight $\bm \zeta$} if $M$ is generated by a highest (resp. lowest) $\ell$-weight vector of $\ell$-weight $\bm \zeta$. When we would like to stress the dependence on $\s$, we shall write $\ell^\s$-weight.

In general, $\ell$-weight vectors do not need to be eigenvectors of $t_{ii}(u)$. However, from the Gauss decomposition one can deduce that $v$ is a highest $\ell$-weight vector of $\ell$-weight $\bs \zeta $ if and only if
\beq\label{eq:t-singular}
t_{ij}(u)v=0,\quad t_{kk}(u)v=\zeta_k(u)v,\quad 1\lle i<j\lle m+n,\ k\in \bar I.
\eeq
Note that similar formulas do not hold for a lowest $\ell$-weight vector $v$ of $\ell$-weight $\bm \zeta$. 

A $\YglMN$-module $M$ is called \emph{thin} if $M$ has a basis consisting of $\ell$-weight vectors with distinct $\ell$-weights.

Let $v$ and $v'$ be highest $\ell$-weight vectors of $\ell$-weights $\bs \zeta$ and $\bs \vartheta$, respectively. Then, by \eqref{eq:t-singular} and \eqref{eq Hopf}, 
we have
\[
t_{ij}(u)(v\otimes v')=0,\quad t_{kk}(u)(v\otimes v')=\zeta_k(u)\vartheta_k(u)(v\otimes v'), \quad 1\lle i<j\lle m+n,\ k\in \bar I.
\]
Hence $v\otimes v'$ is a highest $\ell$-weight vector of $\ell$-weight $\bs \zeta \bs \vartheta$. In particular, we have
\[
e_{i}(u)(v\otimes v')=0,\quad d_{j}(u)(v\otimes v')=\zeta_j(u)\vartheta_j(u)(v\otimes v'), \quad i\in I,\ j\in \bar I.
\]

Every finite-dimensional irreducible $\YglMN$-module is a highest $\ell$-weight module. Let $\bm\zeta \in \mathfrak B$ be an $\ell$-weight. There exists a unique irreducible highest $\ell$-weight $\YglMN$-module of highest $\ell$-weight $\bm\zeta$. We denote it by $L(\bm \zeta)$. The criterion for $L(\bm \zeta)$ to be finite-dimensional when $\s$ is the standard parity sequence is as follows.

\begin{theorem}[\cite{Zh:1996}]\label{thm:Zh}
When $\s$ is the standard parity sequence, then the irreducible $\YglMN$-module $L(\bm \zeta)$ is finite-dimensional if and only if there exist monic polynomials $g_i(u)$, $i\in \bar I$, such that
\[
\frac{\zeta_i(u)}{\zeta_{i+1}(u)}=\frac{g_i(u+s_i)}{g_i(u)},\quad \frac{\zeta_m(u)}{\zeta_{m+1}(u)}=\frac{g_m(u)}{g_{m+n}(u)}, \quad i\in I,\ i\ne m,	
\]
and $\deg g_m=\deg g_{m+n}$. \qed
\end{theorem}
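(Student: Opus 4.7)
The plan is to mirror the Drinfeld-type classification of finite-dimensional modules over the ordinary Yangian $\YglN$: deduce necessary conditions by restricting $\bs\zeta$ to the rank-one subalgebras attached to each simple root $\alpha_i$, $i\in I$, and invoking the known classifications there; then establish sufficiency by realizing the prescribed Drinfeld polynomial data inside a tensor product of evaluation modules and passing to its irreducible quotient.

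For necessity, fix $i\in I$ and consider the subalgebra of $\YglMN$ generated by the coefficients of $e_i(u)$, $f_i(u)$, $d_i(u)$, $d_{i+1}(u)$. Thanks to the Drinfeld current relations \eqref{eq com de}--\eqref{eq com ef}, this subalgebra acts on the cyclic submodule $U$ of $L(\bs\zeta)$ generated by its highest $\ell$-weight vector $v$ through either a (possibly sign-twisted) $\mathrm{Y}(\mathfrak{gl}_2)$ when $s_i=s_{i+1}$, or through $\Yone$ when $s_i\ne s_{i+1}$. The module $U$ is finite-dimensional and is itself a highest $\ell$-weight module with $\ell$-weight $(\zeta_i(u),\zeta_{i+1}(u))$. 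For $i\ne m$ the simple root is even, and Drinfeld's classical theorem for $\mathrm{Y}(\mathfrak{sl}_2)$ produces a monic polynomial $g_i$ with $\zeta_i(u)/\zeta_{i+1}(u)=g_i(u+s_i)/g_i(u)$. For $i=m$ the simple root is odd; the finite-dimensional classification for $\Yone$ — whose irreducibles are essentially tensor products of evaluation modules up to one-dimensional twists — forces $\zeta_m(u)/\zeta_{m+1}(u)$ to be a ratio $g_m(u)/g_{m+n}(u)$ of two monic polynomials of equal degree.

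For sufficiency, given the data $(g_1,\dots,g_{m+n-1},g_{m+n})$ with $\deg g_m=\deg g_{m+n}$, factor each $g_i$ into linear factors $\prod_k(u-z_k^{(i)})$ and build a tensor product of evaluation modules of the form $L(\la^{(1)})_{z_1}\otimes\cdots\otimes L(\la^{(N)})_{z_N}$, where each factor is pulled back through $\pi_{m|n}\circ\tau_{z_k}$ from a polynomial $\glMN$-module $L(\la^{(k)})$ with $\la^{(k)}$ an $(m|n)$-hook partition. The tensor product of highest $\ell$-weight vectors is itself a highest $\ell$-weight vector, and the product of highest $\ell$-weights — computable via \eqref{eq natural} and the coproduct \eqref{eq Hopf} — is arranged to equal $\bs\zeta$. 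Passing to the irreducible quotient of the cyclic submodule it generates yields a finite-dimensional realization of $L(\bs\zeta)$.

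The hard part will be the combinatorial matching inside the sufficiency step: one must show that any tuple $(g_1,\dots,g_{m+n})$ satisfying the stated degree equality can indeed be assembled from the highest $\ell$-weights of hook-partition evaluation modules at various shifts $z_k$. The key inputs are the explicit $\ell$-weight formula for polynomial evaluation modules stemming from \eqref{eq natural} together with the Schur--Sergeev duality (Theorem \ref{thm schur}), and the reduction — via linear factorization of the $g_i$ — of the matching problem to the case where each factor is a fundamental module of an appropriate column-strict hook shape, at which point the computation becomes a bookkeeping exercise on evaluation parameters.
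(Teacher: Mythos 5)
First, a point of comparison: the paper does not prove this statement at all — it is quoted from \cite{Zh:1996} and closed with a \qed immediately after the statement. So your proposal is being measured against the literature rather than against an argument in the text. Your necessity half is the standard (and correct) route, and it is the same rank-one reduction the paper itself exploits elsewhere (proof of Theorem \ref{thm:mn}): the subalgebra generated by the coefficients of $e_i(u)$, $f_i(u)$, $d_i(u)$, $d_{i+1}(u)$ is a copy of $\mathrm{Y}(\gl_2)$ for $i\ne m$ and of $\Yone$ for $i=m$ (via Lemma \ref{eq psi map} and \eqref{eq:psi t}), the cyclic span of the highest $\ell$-weight vector is a finite-dimensional highest $\ell$-weight module over it, and the rank-one classifications give exactly the stated conditions on $\zeta_i(u)/\zeta_{i+1}(u)$. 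That part is sound in outline.

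The sufficiency half, however, has a genuine gap that is not merely "hard bookkeeping": polynomial (hook-partition) evaluation modules cannot realize all admissible $\bm\zeta$. By \eqref{eq natural} and \eqref{eq:evaluation-map}, a polynomial evaluation module $L(\la^\s)_z$ contributes to the odd-root ratio the factor $\bigl(u-z+\la^\s(e_{mm})\bigr)/\bigl(u-z-\la^\s(e_{m+1,m+1})\bigr)$, whose numerator and denominator roots differ by the \emph{non-negative integer} $\la^\s(e_{mm})+\la^\s(e_{m+1,m+1})$; hence any tensor product of such factors (even after a one-dimensional twist, which changes only the overall normalization and not the ratios) produces $\zeta_m/\zeta_{m+1}=g_m/g_{m+n}$ only when the roots of $g_m$ and $g_{m+n}$ can be paired with differences in $\Z_{\gge 0}$. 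The theorem imposes no such constraint: $g_m(u)=u$, $g_{m+n}(u)=u-\tfrac12$, all other $g_i=1$, is admissible but unreachable by your construction. The repair is to allow general finite-dimensional irreducible $\glMN$-modules as evaluation factors — their highest weights satisfy $\la_i-\la_{i+1}\in\Z_{\gge0}$ only at the even simple roots, while the entry controlling the odd root may be an arbitrary complex number — together with a one-dimensional $\YglMN$-module (all components equal) to match $\zeta_{m+n}(u)$, which the ratio conditions leave free. With that replacement the factor-by-factor matching does reduce to linear factors of the $g_i$, but Schur--Sergeev duality and hook partitions are the wrong tools for this step, since they see only the polynomial part of the category.
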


We finish this section with the following  proposition. Define the length function $\iota:{\bf Q}_{\gge 0}\to \Z_{\gge 0}$ by $\iota(\sum_{i\in I}n_i\alpha_i)=\sum_{i\in I}n_i$.
\begin{proposition}\label{prop:cop}
For $i\in I$, $j\in \bar I$, $k\in \Z_{>0}$, we have
\begin{align}
&\Delta({d_j^{(k)}}) -\sum_{l=0}^{k}d_{j}^{(l)}\otimes d_j^{(k-l)}\in \sum_{\iota(\mu)>0} (\YglMN)_{\mu}\otimes (\YglMN)_{-\mu},\label{eq copro d}\\
&\Delta(e_i^{(k)})-1\otimes e_i^{(k)}\in \sum_{\iota(\mu)>0} (\YglMN)_{\mu}\otimes (\YglMN)_{\alpha_i-\mu},\label{eq copro e} \\
&\Delta(f_i^{(k)})-f_i^{(k)}\otimes 1\in \sum_{\iota(\mu)>0} (\YglMN)_{\mu-\alpha_i}\otimes (\YglMN)_{-\mu} \label{eq copro f}.
\end{align}
\end{proposition}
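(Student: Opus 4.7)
\emph{Proof plan.}
The strategy is to exploit the weight grading of $\YglMN \otimes \YglMN$ together with the Gauss decomposition expressing the Drinfeld generators as polynomials in the RTT generators. Since $U(\fkh) \hookrightarrow \YglMN$ via $e_{ii} \mapsto s_i t_{ii}^{(1)}$ is a sub-Hopf-algebra (the coproduct on $\fkh$ is primitive in both contexts), $\Delta$ preserves the weight grading. Combined with the weights $\mathrm{wt}(d_j^{(k)}) = 0$, $\mathrm{wt}(e_i^{(k)}) = \alpha_i$, $\mathrm{wt}(f_i^{(k)}) = -\alpha_i$, it follows that the coproducts in question have total weights $0$, $\alpha_i$, $-\alpha_i$ respectively. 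What remains is to show (i) that the left-factor weight $\mu$ always lies in ${\bf Q}_{\gge 0}$ (appropriately shifted for $f_i^{(k)}$), and (ii) that the $\iota(\mu) = 0$ part agrees with the stated explicit formula.

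The key input is the RTT coproduct $\Delta(t_{ij}(u)) = \sum_{p \in \bar I} t_{ip}(u) \otimes t_{pj}(u)$. I proceed by induction on $j$ (and analogously on $i$), with the base case $j = 1$ being immediate because the Gauss correction is absent: $d_1(u) = t_{11}(u)$, so $\Delta(d_1(u)) = d_1(u) \otimes d_1(u) + \sum_{p > 1} t_{1p}(u) \otimes t_{p1}(u)$, and each extra term has left weight $\alpha_1 + \cdots + \alpha_{p-1}$ with $\iota > 0$. The $e_1^{(k)}$ and $f_1^{(k)}$ cases follow by the analogous direct expansion of $e_1(u) = d_1(u)^{-1} t_{12}(u)$ and $f_1(u) = t_{21}(u) d_1(u)^{-1}$.

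For the inductive step with $j > 1$, I apply $\Delta$ to the Gauss identity $d_j(u) = t_{jj}(u) - \sum_{\ell<j} f_{j\ell}(u) d_\ell(u) e_{\ell j}(u)$. The contributions from $\Delta(t_{jj}) = \sum_p t_{jp} \otimes t_{pj}$ split into: (a) the $p = j$ term $t_{jj} \otimes t_{jj}$, which after re-expanding via Gauss yields $d_j \otimes d_j$ plus a remainder with $\iota(\mu) > 0$; (b) terms with $p > j$ of positive-height left weight; and (c) apparent ``negative weight'' terms with $p < j$. By the inductive hypothesis, the coproducts $\Delta(f_{j\ell})\Delta(d_\ell)\Delta(e_{\ell j})$ in the Gauss correction produce exactly the negative-weight contributions needed to cancel (c), together with further terms lying in $\sum_{\iota(\mu)>0} (\YglMN)_\mu \otimes (\YglMN)_{-\mu}$. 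Extracting the $\iota(\mu) = 0$ component of the surviving series gives $d_j(u) \otimes d_j(u)$, whose $u^{-k}$ coefficient is $\sum_{l=0}^{k} d_j^{(l)} \otimes d_j^{(k-l)}$. The formulas for $\Delta(e_i^{(k)})$ and $\Delta(f_i^{(k)})$ follow in parallel, starting from the Gauss identities for $t_{i,i+1}(u)$ and $t_{i+1,i}(u)$.

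The principal obstacle is tracking the cancellations in (c) with the correct super-signs from \eqref{eq:comm-generators}; organizing the argument as an induction on $j$ is what makes the bookkeeping tractable. An alternative, more streamlined route is to invoke the quasi-determinant presentation of the Drinfeld generators together with standard identities for quasi-determinants of block matrices, which package these cancellations automatically.
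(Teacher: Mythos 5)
Your overall framework --- using that $\Delta$ intertwines the adjoint $\h$-action (so the coproducts have total weights $0$, $\alpha_i$, $-\alpha_i$), feeding the RTT coproduct $\Delta(t_{ij}(u))=\sum_p t_{ip}(u)\otimes t_{pj}(u)$ into the Gauss decomposition, and inducting on the index --- is the right one and is in the same spirit as the argument the paper invokes (the paper gives no details itself; it defers to the standard-parity case in \cite[Proposition 2.7]{LM:2020}). The base case and the reduction to a strengthened inductive hypothesis covering all $e_{\ell j}$, $f_{j\ell}$ are fine.

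The weak point is the inductive step: the claim that the negative-left-weight terms cancel and that the $\iota(\mu)=0$ component of $\Delta(d_j(u))$ is exactly $d_j(u)\otimes d_j(u)$ is essentially the full content of the proposition, and as written it is asserted (``produce exactly the negative-weight contributions needed to cancel (c)'') rather than derived. Note that it does \emph{not} follow formally from the inductive hypothesis applied factorwise: the hypothesis controls $\Delta(f_{j\ell})$, $\Delta(d_\ell)$, $\Delta(e_{\ell j})$ separately, but the ``leading'' product $f_{j\ell}(u)d_\ell(u)\otimes d_\ell(u)e_{\ell j}(u)$ has left weight $\epsilon_j-\epsilon_\ell\notin{\bf Q}_{\gge 0}$, and the weight-zero component of $\Delta\bigl(f_{j\ell}(u)d_\ell(u)e_{\ell j}(u)\bigr)$ is assembled from cross terms (left weights $\mu_f+\mu_d+\mu_e=\epsilon_\ell-\epsilon_j$ with each $\mu$ in its allowed cone), about which the hypothesis says nothing precise. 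So either this cancellation must be computed, or the argument reorganized so that no cancellation occurs. The reorganization is exactly the route you relegate to an ``alternative'': replace the quasideterminant $d_j(u)$ by the ratio of consecutive principal quantum minors (Berezinian-type minors in the super/mixed-parity setting, cf.\ \cite{Gow:2007,MR:2014,HM:2020}), whose coproduct is a sum over increasing column multi-indices $c_1<\cdots<c_j$; each left factor then has weight $\sum_k(\epsilon_k-\epsilon_{c_k})\in{\bf Q}_{\gge 0}$ automatically (since $c_k\gge k$), with weight zero only for $c_k=k$, so the diagonal term is isolated with no bookkeeping. I would promote that to the main argument, or else carry out the cancellation in (c) explicitly for $j=2$ to exhibit the mechanism before claiming the general case.
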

\begin{proof}
The proof is similar to the case when $\s$ is the standard parity sequence in \cite[Proposition 2.7]{LM:2020}.
\end{proof}

Regard $\UglMN$ as a subalgebra of $\YglMN$, then we have $d_{i,1}=s_ie_{ii}$. In particular, one assigns a usual $\glMN$-weight to an $\ell$-weight via the map $\varpi:\mathfrak B\to \h^*,\ \bm\zeta\mapsto \varpi(\bm\zeta)$ by $\varpi(\bm\zeta)(e_{ii})=s_i\zeta_{i,1},$ where $\zeta_{i,1}$ is the coefficient of $u^{-1}$ in $\zeta_i(u)$.  

Given a $\YglMN$-module, consider it as a $\glMN$-module and its $\glMN$-weight subspaces $(M)_{\mu}$, see \eqref{eq:uweight-space}. The following lemma is straightforward from the equalities $[d_{i,r}, d_{j,s}]=0$, \eqref{eq com de}, and \eqref{eq com ef}.
\begin{lemma}\label{lem:wt-change}
We have
\[
e_{i,r}(M)_{\mu}\subset (M)_{\mu+\alpha_i},\quad f_{i,r}(M)_{\mu}\subset (M)_{\mu-\alpha_i},\quad d_{j,r}(M)_{\mu}\subset (M)_{\mu},
\]
for $i\in I$, $j\in \bar I$, and $r\in\Z_{>0}$.\qed
\end{lemma}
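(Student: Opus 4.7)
The plan is to verify each of the three inclusions by computing, for $v\in(M)_\mu$, the action of every Cartan element $e_{kk}$ on $e_{i,r}v$, $f_{i,r}v$, and $d_{j,r}v$ and checking that the resulting $\glMN$-weight is shifted by $\alpha_i$, $-\alpha_i$, and $0$ respectively. Everything reduces to a one-line commutator calculation once one identifies the enveloping algebra $\UglMN$ inside the Drinfeld presentation.

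The first step is to show that $e_{kk}=s_k d_{k,1}$ inside $\YglMN$. From the Gauss decomposition of Section \ref{sec:Gauss}, the cross terms $f_{ik}(u)d_k(u)e_{ki}(u)$ appearing in $t_{ii}(u)$ begin at order $u^{-2}$, so $t_{ii}^{(1)}=d_{i,1}$; combined with the embedding $e_{ij}\mapsto s_i t_{ij}^{(1)}$ from Section \ref{sec rtt}, this yields the claim. Specializing \eqref{eq com de} to $r=1$ then gives
\[
[d_{k,1}, e_{j,s}]=(\epsilon_k,\alpha_j)\,e_{j,s},\qquad [d_{k,1}, f_{j,s}]=-(\epsilon_k,\alpha_j)\,f_{j,s},
\]
while $[d_{k,1},d_{j,r}]=0$ is part of the hypothesis. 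For $v\in(M)_\mu$ one has $d_{k,1}v=s_k\mu(e_{kk})v$, and a direct substitution yields
\[
e_{kk}\,e_{i,r}v=\bigl(\mu(e_{kk})+s_k(\epsilon_k,\alpha_i)\bigr)e_{i,r}v.
\]
Since $(\epsilon_k,\epsilon_l)=s_k\delta_{kl}$ by definition of the bilinear form, we compute $s_k(\epsilon_k,\alpha_i)=\delta_{ki}-\delta_{k,i+1}=\alpha_i(e_{kk})$, which is exactly the shift predicted by $\mu\mapsto\mu+\alpha_i$. The computation for $f_{i,r}v$ is parallel but with the opposite sign and yields the shift $\mu\mapsto\mu-\alpha_i$, while the commutativity of the $d$'s immediately gives weight preservation for $d_{j,r}v$.

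No genuine obstacle is present; the only bookkeeping is verifying that the parity signs $s_k$ conspire with the bilinear form $(\epsilon_k,\alpha_i)$ to reproduce exactly the integer pairing $\alpha_i(e_{kk})$ dual to the Cartan basis. No identities beyond those cited in the hint are needed, and in fact \eqref{eq com ef} plays no role in this particular calculation.
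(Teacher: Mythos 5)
Your proof is correct and is exactly the verification the paper leaves implicit: the paper dismisses the lemma as "straightforward" from $[d_{i,r},d_{j,s}]=0$, \eqref{eq com de}, and \eqref{eq com ef}, and your computation via $e_{kk}=s_k d_{k,1}$ and the $r=1$ specialization of \eqref{eq com de} is the intended argument, with the sign bookkeeping $s_k(\epsilon_k,\alpha_i)=\alpha_i(e_{kk})$ done correctly. Your observation that \eqref{eq com ef} is not actually needed is also accurate.
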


\subsection{Category $\mathcal C$ and $q$-character map}\label{sec:q-char}
Let $\mathcal C$ be the category of finite-dimensional $\YglMN$-modules. The category $\mathcal C$ is abelian and monoidal. Let $M\in \mathcal C$ be a finite-dimensional $\YglMN$-module and $\bm\zeta\in \mathfrak B$ an $\ell$-weight. Let
\[
\zeta_i(u)=1+\sum_{j=1}^\infty \zeta_{i,j}u^{-j},\qquad \zeta_i^{(j)}\in \C.
\]
Denote by $M_{\bm \zeta}$ the \emph{generalized $\ell$-weight subspace} corresponding to the $\ell$-weight $\bm\zeta$,
\[
M_{\bm\zeta}:=\{v\in M~|~  (d_{i,j}-\zeta_{i,j})^{\dim M} v=0 \text{ for all }i\in \bar I, \ j\in \Z_{>0}\}.
\]

For a finite-dimensional $\YglMN$-module $M$, define the $q$-{\it character} (or \emph{Yangian character}) of $M$ by the element 
\[
\chi(M):=\sum_{\bm\zeta\in \mathfrak B}\dim(M_{\bm \zeta})[\bm\zeta]\in \Z[\mathfrak B].
\]
This definition is a straightforward generalization of the even case, see \cite{Knight:1995}. It is also called the {\it Gelfand-Tsetlin character}, see \cite[Definition 8.5.7]{Molev:2007}, since the commutative subalgebra $\mathrm Y_{m|n}^0$ generated by coefficients of $d_i(u)$ for all $i\in \bar I$ is usually called the {\it Gelfand-Tsetlin subalgebra} of $\YglMN$. Note that the Gelfand-Tsetlin subalgebra do depend on the parity sequence $\s$ (even up to conjugation on the level of representations), see e.g. \cite[Example 3.1]{L:2021a} where a $\Yone$-module is thin for the standard parity sequence but not thin for the other one.

Let $\mathscr Rep(\mathcal C)$ be the Grothendieck ring of $\mathcal C$, then $\chi$ induces a $\Z$-linear map from $\mathscr Rep(\mathcal C)$ to $\Z[\mathfrak B]$.

\begin{lemma}\label{lem chi morphism}
The map $\chi:\mathscr Rep(\mathcal C)\to \Z[\mathfrak B]$ is an injective ring homomorphism. In particular, the Grothendieck ring $\mathscr Rep(\mathcal C)$ is commutative.
\end{lemma}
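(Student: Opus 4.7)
The plan is to verify three properties in turn: additivity on short exact sequences, multiplicativity under tensor product, and injectivity. Additivity is immediate from the functoriality of generalized $\ell$-weight subspaces, so the substance lies in multiplicativity and injectivity.

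For multiplicativity $\chi(M\otimes N)=\chi(M)\chi(N)$, I would fix $\lambda\in\mathbf{P}$ and, using Lemma \ref{lem:wt-change}, decompose
\[
(M\otimes N)_\lambda=\bigoplus_{\mu+\nu=\lambda}(M)_\mu\otimes(N)_\nu.
\]
Order the summands by any total order on $\mathbf{P}$ refining the partial order $\leq$. By Proposition \ref{prop:cop}, the operator $\Delta(d_j^{(k)})$ acts on each $(M)_\mu\otimes(N)_{\lambda-\mu}$ as the diagonal part $\sum_{l=0}^k d_j^{(l)}\otimes d_j^{(k-l)}$ (which preserves the summand) plus a sum of correction terms $X\otimes Y$ with $X\in(\YglMN)_{\mu'}$, $Y\in(\YglMN)_{-\mu'}$ and $\mu'\in\mathbf{Q}_{\geq 0}\setminus\{0\}$; these corrections send $(M)_\mu\otimes(N)_{\lambda-\mu}$ into $(M)_{\mu+\mu'}\otimes(N)_{\lambda-\mu-\mu'}$, a strictly later summand. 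Thus the matrix of $\Delta(d_j^{(k)})$ on $(M\otimes N)_\lambda$ is block upper-triangular, and its generalized spectrum is controlled by the diagonal blocks. On $M_{\bm\zeta}\otimes N_{\bm\vartheta}$, the operator $\sum_l d_j^{(l)}\otimes d_j^{(k-l)}$ has generalized eigenvalue equal to the coefficient of $u^{-k}$ in $\zeta_j(u)\vartheta_j(u)$, and therefore
\[
(M\otimes N)_{\bm\xi}=\bigoplus_{\bm\zeta\bm\vartheta=\bm\xi}M_{\bm\zeta}\otimes N_{\bm\vartheta},
\]
giving $\chi(M\otimes N)=\chi(M)\chi(N)$.

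For injectivity, I would show that the classes $\chi(L(\bm\zeta))$ are linearly independent in $\Z[\mathfrak{B}]$ as $\bm\zeta$ ranges over highest $\ell$-weights of finite-dimensional irreducibles. Using the triangular decomposition \eqref{eq:tri} together with the highest $\ell$-weight condition (making $\mathrm{Y}_{m|n}^+$ annihilate and $\mathrm{Y}_{m|n}^0$ act by scalars on the generator $v$), one gets $L(\bm\zeta)=\mathrm{Y}_{m|n}^- v$; by Lemma \ref{lem:wt-change} this forces $(L(\bm\zeta))_{\varpi(\bm\zeta)}=\C v$. Consequently $\dim L(\bm\zeta)_{\bm\zeta}=1$ and any other $\ell$-weight $\bm\eta$ appearing in $L(\bm\zeta)$ must satisfy $\varpi(\bm\eta)<\varpi(\bm\zeta)$ strictly. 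Given a finite collection $\{\bm\zeta_1,\ldots,\bm\zeta_r\}$, ordering the indices so that $\varpi(\bm\zeta_i)$ is weakly increasing in a total refinement of the partial order renders the matrix $(\dim L(\bm\zeta_j)_{\bm\zeta_i})$ upper unitriangular, whence linear independence. Commutativity of $\mathscr Rep(\mathcal{C})$ then follows since $\chi$ embeds it into the commutative ring $\Z[\mathfrak{B}]$.

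The main obstacle is the block-triangularity step in the multiplicativity argument: one must carefully track which $(M)_\mu\otimes(N)_{\lambda-\mu}$ pieces receive contributions from each correction in Proposition \ref{prop:cop} and verify that a total refinement of the partial order $\leq$ on $\mathbf{P}$ indeed converts this into strict block-triangularity. Once that is set, the remaining arguments follow the standard Jordan--H\"older/linear-algebra template familiar from $q$-character theory of even Yangians and quantum affine algebras.
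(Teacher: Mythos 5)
Your proof is correct and follows essentially the same route as the paper, which simply defers to \cite[Lemma 2.8 and Corollary 2.9]{LM:2020}: there, multiplicativity is likewise obtained from the triangularity of the coproduct (Proposition \ref{prop:cop}) with respect to a refinement of the weight order, and injectivity from the fact that the highest $\ell$-weight of $L(\bm\zeta)$ occurs with multiplicity one while all other $\ell$-weights have strictly smaller $\glMN$-weight. Your write-up supplies the details the paper omits, but there is no genuine difference in method.
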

\begin{proof}
The proof of the first statement is similar to that of \cite[Lemma 2.8]{LM:2020}. The second statement is \cite[Corollary 2.9]{LM:2020}. 
\end{proof}

\section{More on super Yangians}\label{sec more}
\subsection{Quasi-determinant}\label{sec:quasi}
We shall also need the quasi-determinant presentation, see \cite{GGRW:2005}, of Drinfeld current generating series in terms of RTT generating series. Let $X$ be a square matrix over a ring with identity such that its inverse matrix $X^{-1}$ exists, 
and such that its $(j,i)$-th entry is an invertible element of the ring.  Then the $(i,j)$-th
\emph{quasi-determinant} of $X$ is defined by the formula
{\small \begin{equation*}
|X|_{ij} = \left((X^{-1})_{ji}\right)^{-1} =: \left| \begin{array}{ccccc} x_{11} & \cdots & x_{1j} & \cdots & x_{1n}\\
&\cdots & & \cdots&\\
x_{i1} &\cdots &\boxed{x_{ij}} & \cdots & x_{in}\\
& \cdots& &\cdots & \\
x_{n1} & \cdots & x_{nj}& \cdots & x_{nn}
\end{array} \right|.
\end{equation*}
}
By \cite[Theorem 4.96]{GGRW:2005}, one has
{
\begin{eqnarray*}
d_i (u) &=& \left| \begin{array}{cccc} t_{11}(u) &\cdots &t_{1,i-1}(u) &t_{1i}(u) \\
\vdots &\ddots & &\vdots \\
t_{i1}(u) &\cdots &t_{i,i-1}(u) &\boxed{t_{ii}(u)}
\end{array} \right|, \\
e_{ij}(u) &=& d_i (u)^{-1} \left| \begin{array}{cccc} t_{11}(u) &\cdots &t_{1,i-1}(u) & t_{1j}(u) \\
\vdots &\ddots &\vdots & \vdots \\
t_{i-1,i}(u) &\cdots &t_{i-1,i-1}(u) & t_{i-1,j}(u)\\
t_{i1}(u) &\cdots &t_{i,i-1}(u) &\boxed{t_{ij}(u)}
\end{array} \right|,\\
f_{ji}(u) &=& \left| \begin{array}{cccc} t_{11}(u) &\cdots &t_{1, i-1}(u) & t_{1i}(u) \\
\vdots &\ddots &\vdots &\vdots \\
t_{i-1,1}(u) &\cdots &t_{i-1,i-1}(u) &t_{i-1,i}(u)\\
t_{ji}(u) &\cdots &t_{j, i-1}(u) &\boxed{t_{ji}(u)} 
\end{array} \right|
d_{i}(u)^{-1}.
\end{eqnarray*}
}

\subsection{Homomorphisms between super Yangians}
Fix $m',n'\in\Z_{\gge 0}$ and a parity sequence $\s'\in S_{m'|n'}$. Let $\s'\s$ be the parity sequence in $S_{m'+m|n'+n}$ by concatenating $\s'$ and $\s$. Consider the Lie superalgebra $\gl_{m'|n'}^{\s'}$ and a larger Lie superalgebra $\gl_{m'+m|n'+n}^{\s'\s}$. In the following, we shall drop parity sequence when there is no confusion.

Let $\eta_{m|n}$ be the automorphism of $\YglMN$ given by
\beq\label{eq iso antipode}
\eta_{m|n}: T(u)\mapsto T(-u)^{-1}.
\eeq
Let $\varphi_{m'|n'}:\YglMN\to \mathrm Y(\gl_{m'+m|n'+n})$ be the embedding given by
\[
\varphi_{m'|n'}: t_{ij}(u)\mapsto t_{m'+n'+i,m'+n'+j}(u).
\]
Let $\psi_{m'|n'}:\YglMN\to \mathrm Y(\gl_{m'+m|n'+n})$ be the injective homomorphism given by
\beq\label{eq:psi}
\psi_{m'|n'}:=\eta_{m'+m|n'+n}\circ \varphi_{m'|n'}\circ \eta_{m|n}.
\eeq
Then, for any $1 \lle i,j \lle m+n$, see \cite[Lemma 4.2]{P:2016}, we have:
\begin{equation}\label{eq:psi t}
\psi_{m'|n'} (t_{ij}(u)) = \left| \begin{array}{cccc} t_{11}(u) &\cdots &t_{1,m'+n'}(u) &t_{1, m'+n'+j}(u)\\
\vdots &\ddots &\vdots &\vdots \\
t_{m'+n',1}(u) &\cdots &t_{m'+n',m'+n'}(u) &t_{m'+n', m'+n'+j}(u)\\
t_{m'+n'+i, 1}(u) &\cdots &t_{m'+n'+i,m'+n'}(u) &\boxed{t_{m'+n'+i, m'+n'+j}(u)}
\end{array} \right|.
\end{equation}
The following lemma can be found in \cite{P:2016}.

\begin{lemma}[\cite{Gow:2007,P:2016}]\label{eq psi map}
We have
\[
\psi_{m'|n'}(d_i(u))=d_{m'+n'+i}(u),\quad \psi_{m'|n'}(e_i(u))=e_{m'+n'+i}(u),\quad \psi_{m'|n'}(f_i(u))=f_{m'+n'+i}(u).
\]
\end{lemma}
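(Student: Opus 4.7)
The plan is to leverage the two quasi-determinant presentations already at our disposal: the expressions of $d_i(u)$, $e_i(u)$, and $f_{ji}(u)$ in terms of the RTT generators recorded in Section~\ref{sec:quasi}, together with the formula \eqref{eq:psi t} expressing $\psi_{m'|n'}(t_{ij}(u))$ itself as a quasi-determinant involving the big matrix $T(u)$ of $\mathrm Y(\gl_{m'+m|n'+n})$. The final step will be to collapse the resulting nesting of quasi-determinants via the hereditary (Schur-complement) identity from \cite{GGRW:2005}.

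First I would apply $\psi_{m'|n'}$ to the formula $d_i(u)=|(t_{jk}(u))_{1\lle j,k\lle i}|_{ii}$. Since $\psi_{m'|n'}$ is an algebra homomorphism and since quasi-determinants are built purely through products and formal inversion of series in $u^{-1}$, it commutes with their formation. Hence $\psi_{m'|n'}(d_i(u))$ is the $(i,i)$-quasi-determinant of the $i\times i$ matrix whose $(j,k)$-entry is $\psi_{m'|n'}(t_{jk}(u))$, and by \eqref{eq:psi t} each such entry is exactly the $(m'+n'+j,m'+n'+k)$-entry of the Schur complement of the top-left $(m'+n')\times(m'+n')$ block of $T(u)$.

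Next I would invoke the hereditary identity for quasi-determinants in its Schur-complement form from \cite{GGRW:2005}, which states that for a block matrix $X=\begin{pmatrix}A&B\\ C&D\end{pmatrix}$ with $A$ of size $r\times r$ invertible and indices $p,q>r$, the quasi-determinant $|X|_{pq}$ equals the $(p-r,q-r)$-quasi-determinant of the Schur complement $D-CA^{-1}B$. Applied with $r=m'+n'$ and $X$ equal to the top-left $(m'+n'+i)\times(m'+n'+i)$ truncation of $T(u)$, this collapses the compound quasi-determinant from the previous step to the single $(m'+n'+i,m'+n'+i)$-quasi-determinant of that same truncation, which is $d_{m'+n'+i}(u)$ by Section~\ref{sec:quasi}. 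The cases of $e_i(u)$ and $f_i(u)$ are settled by the identical template, with the outer factor $d_i(u)^{-1}$ appearing in the quasi-determinant formulas of Section~\ref{sec:quasi} being carried by $\psi_{m'|n'}$ to $d_{m'+n'+i}(u)^{-1}$ --- precisely the factor showing up in the quasi-determinant expressions of $e_{m'+n'+i}(u)$ and $f_{m'+n'+i,m'+n'+j}(u)$.

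The main obstacle I anticipate is pure bookkeeping: one must check that the nested blocks in \eqref{eq:psi t} fit exactly into the pattern demanded by the hereditary identity, and that no spurious signs enter from the $\Z_2$-grading. The latter is automatic because all signs from the super-structure are already absorbed into the definition \eqref{eq matrix notation} of $T(u)$ (whose product with itself still satisfies the usual RTT relations), so the quasi-determinant manipulations carry over verbatim to our graded setting.
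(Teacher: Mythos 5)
The paper offers no proof of this lemma, deferring entirely to \cite{Gow:2007,P:2016}; your argument --- apply $\psi_{m'|n'}$ entrywise to the quasi-determinant formulas of Section~\ref{sec:quasi}, recognize via \eqref{eq:psi t} that the resulting entries are those of the Schur complement of the top-left $(m'+n')\times(m'+n')$ block, and collapse the nested quasi-determinant by the hereditary identity of \cite{GGRW:2005} --- is correct and is essentially the proof given in those references. One minor imprecision: the quasi-determinants in Section~\ref{sec:quasi} and in \eqref{eq:psi t} are taken of the plain matrix $(t_{jk}(u))$ rather than of the sign-twisted operator $T(u)$ of \eqref{eq matrix notation}, so your closing justification that the super-signs are ``absorbed into $T(u)$'' is not quite the right reason no signs appear; they simply never enter, because every formula you invoke is already stated for the untwisted matrix.
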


\subsection{Isomorphisms between super Yangians of different parity sequences}\label{sec:id}
Let $\s$ be a parity sequence in $S_{m|n}$. Fix $i\in I$ and assume that $s_i\ne s_{i+1}$. Let $\tl\s$ be the parity sequence in $S_{m|n}$ obtained from $\s$ by switching $s_i$ and $s_{i+1}$. Let $\sigma$ be the simple permutation $(i,i+1)$ in the symmetric group $\mathfrak S_{m+n}$. The super Yangians $\mathrm{Y}(\gl_{m|n}^\s)$ and $\mathrm{Y}(\gl_{m|n}^{\tl\s})$ are isomorphic via the assignment $t_{ab}^\s(u)\mapsto t_{\sigma(a)\sigma(b)}^{\tl\s}(u)$, $1\lle a,b\lle m+n$. We shall identify super Yangians with different parity sequences using these isomorphisms.

\begin{eg}
Let $\s$ be a parity sequence in $S_{1|1}$ and $\tl\s$ the other parity sequence in $S_{1|1}$. The super Yangians $\mathrm{Y}(\gl_{1|1}^\s)$ and $\mathrm{Y}(\gl_{1|1}^{\tl\s})$ are isomorphic via the assignment $t_{ij}^\s(u)\mapsto t_{3-i,3-j}^{\tl\s}(u)$. Identifying these two Yangians using this isomorphism, we have
\beq\label{eq:current11}
\begin{split}
&d_1^{\tl\s}(u)e^{\tl\s}(u)=t_{12}^{\tl\s}(u)=t_{21}^\s(u)=f^\s(u)d_1^\s(u),\\
&f^{\tl\s}(u)d_1^{\tl\s}(u)=t_{21}^{\tl\s}(u)=t_{12}^\s(u)=d_1^\s(u)e^\s(u),\\
&	d_1^\s(u)=t_{11}^{\s}(u)=t_{22}^{\tl\s}(u)
=d_2^{\tl\s}+f^{\tl\s}(u)d_1^{\tl\s}(u)e^{\tl\s}(u)\\
&d_1^{\tl\s}(u-\tl s_1)(d_2^{\tl\s}(u-\tl s_1))^{-1}=(d_1^{\s}(u))^{-1}d_2^{\s}(u),
\end{split}
\eeq
where the last equality is obtained from either \cite[Section 4.3]{LM:2019} or \cite[Proposition 3.6]{HM:2020}. Throughout the paper, we shall drop the index $i$ if $i$ only takes values from $I$ and $m=n=1$.\qed
\end{eg}

\begin{eg}\label{eg:mns}
For the $\glMN$ case, by formulas in Section \ref{sec:quasi}, we have $d_j^\s(u)=d_j^{\tl\s}(u)$, for $j\ne i,i+1$, $x_j^\s(u)=x_j^{\tl\s}(u)$, for $x=e,f$ and $j\ne i$. Moreover, using the map $\psi$ defined as in \eqref{eq:psi} with suitable indices and choices of superalgebras, we have
\beq\label{eq:currentmn}
\begin{split}
&d_i^{\tl\s}(u)e_i^{\tl\s}(u)=f_i^\s(u)d_i^\s(u),\\
&f_i^{\tl\s}(u)d_i^{\tl\s}(u)=d_i^\s(u)e_i^\s(u),\\
&	d_i^\s(u)=d_{i+1}^{\tl\s}+f_i^{\tl\s}(u)d_i^{\tl\s}(u)e_i^{\tl\s}(u)\\
&d_i^{\tl\s}(u-\tl s_i)(d_{i+1}^{\tl\s}(u-\tl s_i))^{-1}=(d_i^{\s}(u))^{-1}d_{i+1}^{\s}(u).
\end{split}
\eeq
where the equalities are proved by using the quasi-determinant presentation of current generators, Lemma \ref{eq psi map}, and \cite[Lemma 4.2]{P:2016}, cf. \eqref{eq:current11}. For the the last equality, see also \cite[Proposition 3.6]{HM:2020}.	\qed
\end{eg}

\section{Odd reflections in Drinfeld current presentation}

\subsection{Odd reflections of super Yangian}\label{sec:odd}
\subsubsection{The case of $\gl_{1|1}$}\label{sec:odd11}
We recall some basic facts about representations of $\Yone$ from \cite{Zh:1995}. Let $\bm \zeta^\s=(\zeta^\s_1(u),\zeta^\s_2(u))$, where $\zeta^\s_1(u),\zeta^\s_2(u)\in\mathcal B=1+u^{-1}\C[[u^{-1}]]$. Denote $L(\bm\zeta^\s)$ the irreducible $\Yone$-module generated by a highest $\ell^\s$-weight vector $v^\s$ of $\ell^\s$-weight $\bm \zeta^\s$. It is known from \cite[Theorem 4]{Zh:1995} (or Theorem \ref{thm:Zh}) that $L(\bm\zeta^\s)$ is finite-dimensional if and only if
\beq\label{eq last 9}
\frac{\zeta^\s_1(u)}{\zeta^\s_2(u)}=\frac{\varphi^\s(u)}{\psi^\s(u)},
\eeq
where $\varphi^\s$ and $\psi^\s$ are relatively prime monic polynomials in $u$ of the same degree. Moreover, if $\deg\varphi^\s=k$, then $\dim L(\bm\zeta^\s)=2^k$ and a basis of $L(\bm\zeta^\s)$ in terms of RTT generators is also given in \cite[Equation (18)]{Zh:1995}. Here we reformulate it using the Drinfeld current generators.

Let
\beq\label{eq last 8}
\varphi^\s(u)=\prod_{i=1}^k(u+a_i)=u^k+\sum_{i=1}^k \gamma_i u^{k-i},\quad \psi^\s(u)=\prod_{j=1}^k(u-b_j)=u^k+\sum_{j=1}^k \eta_j u^{k-j},
\eeq
where $a_i,\gamma_i,b_j,\eta_j\in\C$. Then $a_i+b_j\ne 0$ for all $1\lle i,j\lle k$.

\begin{lemma}\label{lem:basis}
The set $\{f_{i_1}^\s f_{i_2}^\s \cdots f_{i_j}^\s v^\s,\ 1\lle i_1<\cdots<i_j\lle k,\ 0\lle j\lle k\}$ form a basis of $L(\bm\zeta^{\s})$.	
\end{lemma}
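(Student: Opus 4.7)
The plan is to match the known count $\dim L(\bm\zeta^{\s})=2^{k}$ by exhibiting the proposed $2^{k}$ vectors as a spanning family; linear independence will then be forced by the dimension equality. Since $s_{1}\ne s_{2}$, the remark following \eqref{eq com ef} gives the anticommutation $f^{\s}_{r}f^{\s}_{s}=-f^{\s}_{s}f^{\s}_{r}$ and in particular $(f^{\s}_{r})^{2}=0$. Combined with the triangular decomposition \eqref{eq:tri} (which, together with $e^{\s}(u)v^{\s}=0$ and $d^{\s}_{i}(u)v^{\s}=\zeta^{\s}_{i}(u)v^{\s}$, forces $L(\bm\zeta^{\s})=\mathrm{Y}_{1|1}^{-}v^{\s}$), this reduces the problem to showing that any sorted monomial $f^{\s}_{r_{1}}\cdots f^{\s}_{r_{j}}v^{\s}$ with $r_{1}<\cdots<r_{j}$ can be rewritten using only indices from $\{1,\dots,k\}$. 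The crux is the single-index recursion
\be
\sum_{i=0}^{k}\gamma_{i}\,f^{\s}_{k+s-i}\,v^{\s}=0\qquad (s\gge 1,\ \gamma_{0}:=1),
\ee
which, applied to the rightmost factor of a sorted monomial---the factor that acts on $v^{\s}$ first---strictly decreases the largest index and thus terminates after finitely many iterations.

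To establish this recursion I proceed in two steps. First, I extract the scalar recursion for $\phi^{\s}(u):=d^{\s}_{1}(u)^{-1}d^{\s}_{2}(u)$: the highest $\ell^{\s}$-weight property together with \eqref{eq last 9} gives $\varphi^{\s}(u)\phi^{\s}(u)v^{\s}=\psi^{\s}(u)v^{\s}$, and since $\psi^{\s}$ is polynomial of degree $k$ in $u$, comparing coefficients of $u^{-s}$ for $s\gge 1$ yields $\sum_{i=0}^{k}\gamma_{i}\phi^{\s}_{k+s-i}v^{\s}=0$. Second, the super-commutator relation \eqref{eq com ef} combined with $e^{\s}(u)v^{\s}=0$ produces the crucial identity $e^{\s}_{r}f^{\s}_{t}v^{\s}=\{e^{\s}_{r},f^{\s}_{t}\}v^{\s}=-s_{2}\phi^{\s}_{r+t-1}v^{\s}$ for all $r,t\gge 1$. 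Setting $w_{s}:=\sum_{i=0}^{k}\gamma_{i}f^{\s}_{k+s-i}v^{\s}$, direct substitution gives
\be
e^{\s}_{r}\,w_{s}=-s_{2}\sum_{i=0}^{k}\gamma_{i}\,\phi^{\s}_{k+(r+s-1)-i}\,v^{\s}=0,\qquad r\gge 1,
\ee
by the $\phi^{\s}$-recursion applied with shift $r+s-1\gge 1$, so $e^{\s}(u)w_{s}=0$.

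To conclude $w_{s}=0$ I invoke irreducibility. By Lemma \ref{lem:wt-change}, $w_{s}$ lies in the $\gl_{1|1}$-weight space $(L(\bm\zeta^{\s}))_{\mu-\alpha}$, where $\mu$ is the weight of $v^{\s}$, and every coefficient of $e^{\s}(u)$ annihilates $w_{s}$. Were $w_{s}$ nonzero, the cyclic $\Yone$-submodule it generates would reduce by \eqref{eq:tri} to $\mathrm{Y}_{1|1}^{-}\mathrm{Y}_{1|1}^{0}w_{s}$, all of whose weights lie in $\mu-\alpha-{\bf Q}_{\gge 0}$; this is a proper nonzero submodule, since it cannot contain the strictly higher-weight vector $v^{\s}$, contradicting the irreducibility of $L(\bm\zeta^{\s})$. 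Hence $w_{s}=0$, which establishes the recursion, and the termination argument from the first paragraph then finishes the proof. I expect this last step---upgrading the scalar $\phi^{\s}$-recursion to the vectorial $f^{\s}$-recursion---to be the main conceptual obstacle, because the $f^{\s}$-recursion is not an operator identity in $\Yone$ but holds only in the specific module $L(\bm\zeta^{\s})$, so irreducibility must be invoked in an essential way to close the gap.
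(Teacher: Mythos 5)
Your proposal is correct and follows essentially the same route as the paper: reduce to spanning via $\dim L(\bm\zeta^\s)=2^k$, derive the scalar recursion for the coefficients of $\phi^\s(u)=(d_1^\s(u))^{-1}d_2^\s(u)$ on $v^\s$ from \eqref{eq last 9}, upgrade it to the relation $\sum_{i=0}^k\gamma_i f^\s_{k+s-i}v^\s=0$ by showing this vector is annihilated by all $e^\s_r$ via \eqref{eq com ef} and killing it with irreducibility, the triangular decomposition \eqref{eq:tri}, and Lemma \ref{lem:wt-change}. Your explicit termination argument on the rightmost factor of a sorted monomial just makes precise the paper's ``clear by induction'' step.
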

\begin{proof}
Since $\dim L(\bm\zeta^\s)=2^k$, it suffices to show that this set spans $L(\bm\zeta^\s)$. Note that the generators $f_i^\s $ anti-commute, by the triangular decomposition \eqref{eq:tri}, it reduces to show that $f_{k+j}^\s v^\s$ for $j\in \Z_{>0}$ can be expressed as a linear combination of $f_1^\s v^\s,\dots,f_k^\s v^\s$. 

Recall that $\phi^\s(u)=(d_1^\s(u))^{-1}d_2^\s(u)$. Let $\zeta^\s_2(u)/\zeta^\s_1(u)=1+\sum_{r\gge 1}\theta_ru^{-r}$, where $\theta_r\in \C$. Then it follows from \eqref{eq last 9} and \eqref{eq last 8} that
\[
\theta_{k+j}=-\sum_{i=1}^k\gamma_i\theta_{k+j-i},
\]
which in turn implies by \eqref{eq com ef} that
\begin{align*}
-s_2[e_i^\s ,\gamma_kf_j^\s +\cdots+\gamma_1f_{k+j-1}^\s +f_{k+j}^\s ]v^\s =\sum_{r=1}^k\gamma_{k+1-r}\theta_{i+j+r-2}v^\s +\theta_{k+i+j-1}v^\s =0,
\end{align*}
for all $i,j\in\Z_{>0}$.

Since $v^\s $ is a highest $\ell^\s $-weight vector, we conclude that $e_i^\s (\gamma_kf_j^\s +\cdots+\gamma_1f_{k+j-1}^\s +f_{k+j}^\s )v^\s=0$ for all $i,j\in\Z_{>0}$. Fix $j\in \Z_{>0}$. We must have $(\gamma_kf_j^\s +\cdots+\gamma_1f_{k+j-1}^\s +f_{k+j}^\s )v^\s= 0$. Otherwise, by the triangular decomposition \eqref{eq:tri} and Lemma \ref{lem:wt-change}, the submodule generated by $(\gamma_kf_j^\s +\cdots+\gamma_1f_{k+j-1}^\s +f_{k+j}^\s )v^\s$ does not contain $v^\s $ and hence is proper and nonzero which contradicts the irreducibility of $L(\bm\zeta^\s)$. Therefore $f_{k+j}^\s v^\s=-(\gamma_kf_j^\s +\cdots+\gamma_1f_{k+j-1}^\s )v^\s$. Now it is clear by induction that $f_{k+j}^\s v^\s $ for $j\in \Z_{>0}$ can be expressed as a linear combination of $f_1^\s v^\s,\dots,f_k^\s v^\s$. 
\end{proof}

For a subset $J$ of $\{1,\dots,k\}$, set $\varphi^\s_{J}=\prod_{i\in J } (u+a_i)$. By convention, $\varphi^\s_{\emptyset}=1$.

\begin{lemma}[{\cite[Lemma 3.8]{L:2021a}}]
\label{lem:char-1-1}
We have 
$$
\chi(L(\bm\zeta^\s))=\sum_{J\subset \{1,\dots,k\}}[\bm\zeta^\s]\cdot\left[\Big(\frac{\varphi^\s_{J}(u-s_1)}{\varphi^\s_{J}(u)},\frac{\varphi^\s_{J}(u-s_1)}{\varphi^\s_{J}(u)}\Big)\right],
$$	
where the summation is over all subsets of $\{1,\dots,k\}$.
\end{lemma}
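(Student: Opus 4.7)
The plan is to realize $L(\bm\zeta^\s)$ as a tensor product of $k$ two-dimensional irreducible $\Yone$-modules (up to a one-dimensional twist) and invoke the multiplicativity of $\chi$ from Lemma \ref{lem chi morphism}. Write $\varphi^\s(u)/\psi^\s(u)=\prod_{i=1}^k (u+a_i)/(u-b_i)$ after a fixed pairing of roots. The hypothesis $a_i+b_j\ne 0$ guarantees that each such pair defines a two-dimensional irreducible $\Yone$-module $L(\bm\xi_i^\s)$ with $\xi_{i,1}^\s(u)/\xi_{i,2}^\s(u)=(u+a_i)/(u-b_i)$. Combining \eqref{eq last 9} with the classification recalled in the introduction -- namely, that every finite-dimensional irreducible $\Yone$-module is a tensor product of evaluation modules up to a one-dimensional twist -- yields
$$
L(\bm\zeta^\s)\;\cong\;L(\bm\xi_1^\s)\otimes\cdots\otimes L(\bm\xi_k^\s)\otimes(\text{one-dimensional module}),
$$
where the one-dimensional factor absorbs the scalar discrepancy between $\bm\zeta^\s$ and $\prod_i\bm\xi_i^\s$.

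Second, I would compute the $q$-character of a single two-dimensional factor $L(\bm\xi_i^\s)$ directly. By Lemma \ref{lem:basis} a basis is $\{v_i,f^\s v_i\}$. Using \eqref{eq com de} and \eqref{eq com ef}, or equivalently extracting the action of $d_j^\s(u)$ on $f^\s v_i$ through the Gauss decomposition applied to the evaluation realization, one obtains
$$
d_j^\s(u)(f^\s v_i)=\xi_{i,j}^\s(u)\cdot\frac{u-s_1+a_i}{u+a_i}\,(f^\s v_i),\qquad j=1,2.
$$
The identical shift factor in both components is the distinguishing feature of $\gl_{1|1}$: since $s_2=-s_1$, one has $(\epsilon_1,\alpha)=(\epsilon_2,\alpha)=s_1$, so \eqref{eq com de} produces the same shift in both Cartan components. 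Therefore
$$
\chi(L(\bm\xi_i^\s))=[\bm\xi_i^\s]\Bigl(1+\Bigl[\Bigl(\tfrac{u-s_1+a_i}{u+a_i},\tfrac{u-s_1+a_i}{u+a_i}\Bigr)\Bigr]\Bigr).
$$

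Multiplying these characters over $i=1,\dots,k$ via Lemma \ref{lem chi morphism} and expanding the product as a sum over subsets $J\subset\{1,\dots,k\}$, with $\varphi_J^\s(u)=\prod_{i\in J}(u+a_i)$, gives the desired formula. The main difficulty in carrying out this plan lies in establishing the tensor-product identification at non-generic parameters, where the tensor product on the right may fail to be irreducible. This is resolved by a cyclicity argument: the pure tensor of the highest-$\ell^\s$-weight vectors is itself a highest $\ell^\s$-weight vector of $\ell^\s$-weight $\prod_i\bm\xi_i^\s$ (equal to $\bm\zeta^\s$ after the one-dimensional twist), hence generates a submodule that surjects onto $L(\bm\zeta^\s)$; the dimension equality $2^k=\dim L(\bm\zeta^\s)$ then forces this surjection to be an isomorphism, so that the characters indeed agree.
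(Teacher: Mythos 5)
Your proposal is correct, but note that the paper does not actually prove Lemma \ref{lem:char-1-1}: it imports the statement by citation from \cite[Lemma 3.8]{L:2021a} (a paper on Gelfand--Tsetlin bases, where the $\ell$-weights are presumably read off an explicit eigenbasis). So you are supplying a proof where the paper supplies none, and the one you give is sound and fits the toolkit already assembled here. Your two key steps both check out: (i) the tensor factorization $L(\bm\zeta^\s)\cong L(\bm\xi_1^\s)\otimes\cdots\otimes L(\bm\xi_k^\s)\otimes(\text{one-dim})$ is forced by your cyclicity-plus-dimension argument --- the pure tensor of highest $\ell^\s$-weight vectors is highest of $\ell^\s$-weight $\prod_i\bm\xi_i^\s$ (as recorded in Section \ref{sec high rep}), its cyclic submodule surjects onto the irreducible quotient of dimension $2^k$ (the dimension formula recalled from \cite{Zh:1995}), and $2^k$ is also the dimension of the ambient tensor product, so everything collapses to an isomorphism; in fact this argument makes your initial appeal to the classification of finite-dimensional irreducible $\Yone$-modules redundant, which is a virtue. (ii) The $\ell$-weight of $f^\s v_i$ carries the same shift factor $\tfrac{u-s_1+a_i}{u+a_i}$ in both Cartan components, exactly as dictated by Lemma \ref{lem:affineroot} since $(\alpha_1,\epsilon_1)=(\alpha_1,\epsilon_2)=s_1$ for $\gl_{1|1}$; a direct check on the shifted evaluation module confirms your formula, and pinning the shift parameter to $-a_i$ (rather than an undetermined $a$) follows from the relation $f_{r+1}^\s v=-\gamma_1 f_r^\s v$ extracted in the proof of Lemma \ref{lem:basis}, so it would be worth saying that explicitly. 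Multiplicativity of $\chi$ (Lemma \ref{lem chi morphism}) and the binomial expansion over subsets $J$ then give precisely the stated sum. The only cosmetic slip is that irreducibility of each two-dimensional factor needs only $a_i+b_i\ne 0$, while the coprimality hypothesis gives the stronger $a_i+b_j\ne 0$ for all $i,j$; nothing in your argument is harmed by this.
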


Let $\tl\s$ be the other parity sequence in $S_{1|1}$. The following results are obtained in \cite{M:2021}. Here we prove them using a different approach. 

\begin{theorem}\label{thm:11}
The vector $v^{\tl\s}:=f_1^\s \cdots f_k^\s v^\s \in L(\bm\zeta^\s)$ is a highest $\ell^{\tl\s}$-weight vector of $\ell^{\tl\s}$-weight
\[
\Big(\zeta_2^\s(u)\frac{\psi^\s(u-s_1)}{\psi^\s(u)},\zeta_1^\s(u)\frac{\varphi^\s(u-s_1)}{\varphi^\s(u)}\Big).
\]
\end{theorem}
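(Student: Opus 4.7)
My plan is to first show that $v^{\tl\s}$ is, up to scalar, the unique lowest $\ell^\s$-weight vector of $L(\bm\zeta^\s)$, pin down its $\ell^\s$-weight using the known character formula, and then translate this data into the $\ell^{\tl\s}$-picture using the Gauss-decomposition identities \eqref{eq:current11}. The key observation is that the basis produced by Lemma \ref{lem:basis} places $v^{\tl\s}$ in the one-dimensional $\mathfrak{gl}_{1|1}$-weight subspace of shift $-k\alpha_1$ from the top (only the sequence $(1,\dots,k)$ contributes), while the subspace of shift $-(k+1)\alpha_1$ vanishes. By Lemma \ref{lem:wt-change} the commuting series $d_1^\s(u),d_2^\s(u)$ preserve weight subspaces, so $v^{\tl\s}$ is automatically an $\ell^\s$-weight eigenvector, and $f^\s(u)v^{\tl\s}=0$ since it would land in the zero subspace. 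Matching against Lemma \ref{lem:char-1-1}, the only subset with $|J|=k$ is $J=\{1,\dots,k\}$, which forces
\[
d_1^\s(u)v^{\tl\s}=\zeta_1^\s(u)\frac{\varphi^\s(u-s_1)}{\varphi^\s(u)}v^{\tl\s},\qquad d_2^\s(u)v^{\tl\s}=\zeta_2^\s(u)\frac{\varphi^\s(u-s_1)}{\varphi^\s(u)}v^{\tl\s}.
\]

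Next I would apply the first identity of \eqref{eq:current11}, $d_1^{\tl\s}(u)e^{\tl\s}(u)=f^\s(u)d_1^\s(u)$, to $v^{\tl\s}$. Since $d_1^\s(u)$ acts as a scalar series and $f^\s(u)v^{\tl\s}=0$, the right-hand side vanishes, giving $d_1^{\tl\s}(u)e^{\tl\s}(u)v^{\tl\s}=0$. Because $d_1^{\tl\s}(u)=1+O(u^{-1})$ is invertible in $\End(L(\bm\zeta^\s))[[u^{-1}]]$, this forces $e^{\tl\s}(u)v^{\tl\s}=0$, so $v^{\tl\s}$ is a highest $\ell^{\tl\s}$-weight vector. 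Then the third identity of \eqref{eq:current11} combined with $e^{\tl\s}(u)v^{\tl\s}=0$ yields
\[
d_2^{\tl\s}(u)v^{\tl\s}=d_1^\s(u)v^{\tl\s}=\zeta_1^\s(u)\frac{\varphi^\s(u-s_1)}{\varphi^\s(u)}v^{\tl\s},
\]
which is precisely the second component of the claimed $\ell^{\tl\s}$-weight (using \eqref{eq last 9} to recognize it as $\zeta_1^\s(u)\varphi^\s(u-s_1)/\varphi^\s(u)$).

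For the first component, I apply the fourth identity $d_1^{\tl\s}(u-\tl s_1)(d_2^{\tl\s}(u-\tl s_1))^{-1}=(d_1^\s(u))^{-1}d_2^\s(u)$ to $v^{\tl\s}$. Since $v^{\tl\s}$ is a joint eigenvector of $d_1^\s,d_2^\s$ and $d_2^{\tl\s}$ and the Cartan currents commute, the eigenvalues are scalar series that can be inverted and multiplied freely. Substituting and using $\tl s_1=-s_1$ together with \eqref{eq last 9}, a short manipulation gives $d_1^{\tl\s}(u)v^{\tl\s}=\zeta_1^\s(u)\psi^\s(u-s_1)/\varphi^\s(u)\cdot v^{\tl\s}$, which by one more application of \eqref{eq last 9} equals $\zeta_2^\s(u)\psi^\s(u-s_1)/\psi^\s(u)\cdot v^{\tl\s}$, matching the theorem.

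The main obstacle is the first step: although Lemma \ref{lem:char-1-1} lists all generalized $\ell^\s$-weights, one must argue that the particular vector $v^{\tl\s}$ carries the weight indexed by $J=\{1,\dots,k\}$. This matching is the crux and works only because the $\mathfrak{gl}_{1|1}$-weight shift uniquely determines $|J|$, and $|J|=k$ has a single realization. Once this identification is in place, everything else is a purely formal manipulation of the Gauss-decomposition identities \eqref{eq:current11}, and crucially avoids any direct computation of the action of the Drinfeld currents on $v^{\tl\s}$.
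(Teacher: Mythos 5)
Your proposal is correct and follows essentially the same route as the paper's proof: identify $v^{\tl\s}$ as the unique (up to scalar) vector in the one-dimensional generalized $\ell^\s$-weight space singled out by Lemma \ref{lem:char-1-1} via the $\glMN$-weight comparison with the basis of Lemma \ref{lem:basis}, deduce $f^\s(u)v^{\tl\s}=0$, and then read off $e^{\tl\s}(u)v^{\tl\s}=0$ and the actions of $d_1^{\tl\s}(u)$, $d_2^{\tl\s}(u)$ from the identities \eqref{eq:current11}. The only cosmetic difference is that you phrase the vanishing $f^\s(u)v^{\tl\s}=0$ through the vanishing of the weight space at shift $-(k+1)\alpha_1$, whereas the paper cites the proof of Lemma \ref{lem:basis} directly; both are valid.
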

\begin{proof}
By Lemma \ref{lem:char-1-1}, the generalized $\ell^\s$-weight subspace corresponding to the $\ell^\s$-weight $$\Big(\zeta_1^\s(u)\frac{\varphi^\s(u-s_1)}{\varphi^\s(u)},\zeta_2^\s(u)\frac{\varphi^\s(u-s_1)}{\varphi^\s(u)}\Big)$$ has dimensional one and a nonzero vector from it must be a joint eigenvector of $d_1^\s(u)$ and $d_2^\s(u)$. Comparing the usual $\glMN$-weights for elements of the basis in Lemma \ref{lem:basis}, one has that $v^{\s'}$ is inside of this subspace. Therefore, we have
\[
d_1^\s(u)v^{\tl\s}=\zeta_1^\s(u)\frac{\varphi^\s(u-s_1)}{\varphi^\s(u)}v^{\tl\s},\quad d_2^\s(u)v^{\tl\s}=\zeta_2^\s(u)\frac{\varphi^\s(u-s_1)}{\varphi^\s(u)}v^{\tl\s}.
\]
On one hand, by Lemma \ref{lem:basis} and its proof, we have $f^\s(u)v^{\tl\s}=0$ and hence, by \eqref{eq:current11},
\[
e^{\tl\s}(u)v^{\tl\s}=(d_1^{\tl\s}(u))^{-1}f^\s(u)d_1^\s(u)v^{\tl\s}=0.
\]On the other hand, again by \eqref{eq:current11}, we have
\[
d_2^{\tl\s}(u)v^{\tl\s}=\big(d_1^{\s}(u)-f^{\tl\s}(u)f^{\s}(u)d_1^{\s}(u)\big)v^{\tl\s}=d_1^{\s}(u)v^{\tl\s}=\zeta_1^\s(u)\frac{\varphi^\s(u-s_1)}{\varphi^\s(u)}v^{\tl\s},
\]
while the action of $d_1^{\tl\s}(u)$ on $v^{\tl\s}$ can be computed by the last equality of \eqref{eq:current11}.
\end{proof}
\begin{rem}\label{rem:irr}
	By \cite[Theorem 4.9]{Jan:2016}, if the $\Yone$-module $L(\bm\zeta^\s)$ is not finite-dimensional (namely $\zeta^\s_1(u)/\zeta^\s_2(u)$ is not an expansion of a rational function at $\infty$), then $L(\bm\zeta^\s)$ is indeed the Verma-type module. Therefore, by PBW theorem of the super Yangian \cite{P:2016,Tsy:2020}, a basis of $L(\bm\zeta^\s)$ is given by $$
	\{f_{i_1}^\s f_{i_2}^\s \cdots f_{i_j}^\s v^\s,\ 1\lle i_1<\cdots<i_j,\ 0\lle j\}.$$ In particular, it is easy to see that there is no highest $\ell^{\tl\s}$-weight vector in $L(\bm\zeta^\s)$. Hence the odd reflection is not applicable for $L(\bm\zeta^\s)$ if $L(\bm\zeta^\s)$ is not finite-dimensional.\qed
\end{rem}

\begin{rem}
The result of Theorem \ref{thm:11} was implicitly used in \cite[Remark 3.11]{L:2021a}.\qed
\end{rem}

\subsubsection{The case of $\gl_{m|n}$}
Let $\s$ be a parity sequence in $S_{m|n}$. Fix $i\in I$ and assume that $s_i\ne s_{i+1}$. Let $\tl\s$ be the parity sequence in $S_{m|n}$ obtained from $\s$ by switching $s_i$ and $s_{i+1}$.

Let $\bm\zeta^{\s}=(\zeta^\s_i(u))_{i\in \bar I}$ be an $\ell^\s$-weight and $L(\bm\zeta^{\s})$ the corresponding irreducible highest $\ell^\s$-weight module with a highest $\ell^\s$-weight vector $v^\s$. Suppose further that $\zeta^\s_i(u)/\zeta^\s_{i+1}(u)$ is a series as a rational function expanded at $\infty$, see Remark \ref{rem:irr}. Let
\beq\label{eq:rat-cond}
\frac{\zeta^\s_i(u)}{\zeta^\s_{i+1}(u)}=\frac{\varphi^\s_i(u)}{\psi^\s_{i}(u)},
\eeq
where $\varphi^\s_i(u)$ and $\psi^\s_{i}(u)$ are relatively prime monic polynomials of degree $k$ for some $k\in\Z_{\gge 0}$.

Now we generalize Theorem \ref{thm:11} to the general $\glMN$ case.

\begin{theorem}\label{thm:mn}
The vector $v^{\tl\s}:=f_{i,1}^\s\cdots f_{i,k}^\s v^\s\in L(\bm\zeta^{\s})$ is a  highest $\ell^{\tl\s}$-weight vector of $\ell^{\tl\s}$-weight $\bm \zeta^{\tl\s}=(\zeta^{\tl\s}_j(u))_{j\in \bar I}$, where $\zeta^{\tl\s}_j(u)=\zeta^{\s}_j(u)$ for $j\ne i,i+1$ and
\beq\label{eq:tranmn}
\zeta_i^{\tl\s}(u)=\zeta_{i+1}^\s(u)\frac{\psi_i^\s(u-s_i)}{\psi_i^\s(u)},\qquad \zeta_{i+1}^{\tl\s}(u)=\zeta_i^\s(u)\frac{\varphi_i^\s(u-s_i)}{\varphi_i^\s(u)}.
\eeq
\end{theorem}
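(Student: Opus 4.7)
The plan is to reduce to the $\gl_{1|1}$ case handled in Theorem \ref{thm:11}, exploiting the fact that the Drinfeld currents $d_i^\s, d_{i+1}^\s, e_i^\s, f_i^\s$ and their $\tl\s$-counterparts satisfy the relations \eqref{eq:currentmn} which mirror the $\gl_{1|1}$ identities \eqref{eq:current11} exactly; the local $\Yone$-analysis thus carries over word-for-word to the single node $i$. The Drinfeld generators at other indices will be handled via the commutation relations of Section \ref{sec:Gauss}.

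I first dispose of the \emph{unrelated} data. For $j\ne i$, Example \ref{eg:mns} gives $e_j^{\tl\s}(u)=e_j^\s(u)$; from \eqref{eq com ef} the supercommutator $[e_{j,r}^\s,f_{i,s}^\s]=0$, so $e_j^\s(u)$ passes (up to signs) through every $f_{i,r}^\s$ factor of $v^{\tl\s}$ and annihilates $v^\s$, giving $e_j^{\tl\s}(u)v^{\tl\s}=0$. For $j\ne i,i+1$, Example \ref{eg:mns} also gives $d_j^{\tl\s}(u)=d_j^\s(u)$; since $(\epsilon_j,\alpha_i)=0$, relation \eqref{eq com de} implies $[d_{j,r}^\s,f_{i,s}^\s]=0$, whence $d_j^{\tl\s}(u)v^{\tl\s}=\zeta_j^\s(u)v^{\tl\s}=\zeta_j^{\tl\s}(u)v^{\tl\s}$.

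The heart of the argument is the local $\gl_{1|1}$ computation. I claim the recursion
\[
\bigl(\gamma_k f_{i,j}^\s+\gamma_{k-1}f_{i,j+1}^\s+\cdots+\gamma_1 f_{i,k+j-1}^\s+f_{i,k+j}^\s\bigr)v^\s=0 \ \text{ in } L(\bm\zeta^\s), \quad j\ge 1,
\]
holds, where $\gamma_l$ are the coefficients of $\varphi_i^\s(u)$. The verification is a verbatim copy of the proof of Lemma \ref{lem:basis}: relation \eqref{eq com ef} and the rationality \eqref{eq:rat-cond} show that $e_{i,r}^\s$ kills the left-hand side for every $r\ge 1$; combined with the preceding paragraph, every $e_{p,r}^\s$ with $p\in I$ kills it as well. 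Thus this vector is a Drinfeld-singular vector of $\glMN$-weight strictly below that of $v^\s$, and the $\YglMN$-irreducibility of $L(\bm\zeta^\s)$ forces it to vanish. This recursion in turn shows that the submodule $N\subset L(\bm\zeta^\s)$ generated by $v^\s$ under the subalgebra generated by $e_i^\s,f_i^\s,d_i^\s,d_{i+1}^\s$ is spanned by $f_{i_1}^\s\cdots f_{i_j}^\s v^\s$ with $1\le i_1<\cdots<i_j\le k$, so $\dim N\le 2^k$; on the other hand $N$ surjects onto its unique irreducible quotient, which by the $\gl_{1|1}$ case of Theorem \ref{thm:Zh} is the $2^k$-dimensional irreducible $\Yone$-module of highest $\ell^\s$-weight $(\zeta_i^\s(u),\zeta_{i+1}^\s(u))$, so $N$ coincides with it. In particular $v^{\tl\s}\ne 0$, and Lemma \ref{lem:char-1-1} applied to $N$ (with a $\glMN$-weight comparison via the basis of Lemma \ref{lem:basis}) yields $f_i^\s(u)v^{\tl\s}=0$ and
\[
d_i^\s(u)v^{\tl\s}=\zeta_i^\s(u)\frac{\varphi_i^\s(u-s_i)}{\varphi_i^\s(u)}v^{\tl\s}, \quad d_{i+1}^\s(u)v^{\tl\s}=\zeta_{i+1}^\s(u)\frac{\varphi_i^\s(u-s_i)}{\varphi_i^\s(u)}v^{\tl\s}.
\]
Feeding these into \eqref{eq:currentmn} (first $d_i^{\tl\s}(u)e_i^{\tl\s}(u)=f_i^\s(u)d_i^\s(u)$ to get $e_i^{\tl\s}(u)v^{\tl\s}=0$, then $d_i^\s(u)=d_{i+1}^{\tl\s}(u)+f_i^{\tl\s}(u)d_i^{\tl\s}(u)e_i^{\tl\s}(u)$ for the eigenvalue of $d_{i+1}^{\tl\s}(u)$, and finally the last identity of \eqref{eq:currentmn} for the eigenvalue of $d_i^{\tl\s}(u)$) delivers \eqref{eq:tranmn}, exactly as in the proof of Theorem \ref{thm:11}.

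The main obstacle is precisely the non-vanishing of $v^{\tl\s}$ in $L(\bm\zeta^\s)$: one only assumes the rationality \eqref{eq:rat-cond} at the single node $i$, so $L(\bm\zeta^\s)$ need not be finite-dimensional as a $\YglMN$-module and the $\Yone$-submodule $N$ is not a priori irreducible, whence Theorem \ref{thm:11} cannot be invoked on $N$ directly. The resolution is the singular-vector argument above, which leverages the $\YglMN$-irreducibility of $L(\bm\zeta^\s)$ to force the defining relations of the $2^k$-dimensional irreducible $\Yone$-module to hold inside $N$; once this structural identification is secured, everything reduces to the already-proven $\Yone$ statement.
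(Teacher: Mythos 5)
Your proposal is correct and follows essentially the same route as the paper: reduce to the $\mathrm{Y}(\gl_{1|1})$ subalgebra generated by $d_i^\s,d_{i+1}^\s,e_i^\s,f_i^\s$, use the irreducibility of $L(\bm\zeta^\s)$ to rerun the singular-vector recursion of Lemma \ref{lem:basis} and identify the local submodule with the $2^k$-dimensional irreducible $\Yone$-module, then repeat the proof of Theorem \ref{thm:11} via \eqref{eq:currentmn} and dispose of the indices $j\ne i$ (resp.\ $j\ne i,i+1$) by the commutation relations \eqref{eq com ef} and \eqref{eq com de}. You spell out the non-vanishing of $v^{\tl\s}$ and the dimension count more explicitly than the paper, which compresses this into ``by the same reasoning as in Lemma \ref{lem:basis}.''
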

\begin{proof}
The subalgebra generated by the coefficients of $d_i^\s(u)$, $d_{i+1}^\s(u)$, $e_i^\s(u)$, $f_i^\s(u)$ is isomorphic to the super Yangian $\mathrm{Y}(\gl_{1|1}^{\bar\s})$ where $\bar\s:=(s_i,s_{i+1})$, by e.g. Lemma \ref{eq psi map} and \eqref{eq:psi t}. 

Consider the $\mathrm{Y}(\gl_{1|1}^{\bar\s})$-module $M$ generated by $v^\s$. Then $M$ is a highest $\ell^{\bar\s}$-weight $\Yone$-module with highest $\ell^{\bar\s}$-weight $\bar{\bm\zeta}^{\bar\s}=(\zeta_i^\s(u),\zeta_{i+1}^\s(u))$. Since $L(\bm\zeta^{\s})$ is irreducible, by the same reasoning as in Lemma \ref{lem:basis}, we have a similar basis of $M$ described as in Lemma \ref{lem:basis} and $\dim M= 2^k=\dim L(\bar{\bm\zeta}^{\bar\s})$. Therefore, repeating the proof of Theorem \ref{thm:11} and using \eqref{eq:currentmn}, we obtain $e_i^{\tl\s}(u)v^{\tl\s}=0$ and the equality \eqref{eq:tranmn}.

If $j\ne i,i+1$, then $d_j^{\tl\s}(u)=d_j^{\s}(u)$ commutes with $f_{i,r}^\s$ for $r\in\Z_{>0}$ and hence $d_j^{\tl\s}(u)v^{\tl\s}=\zeta_j^\s(u)v^{\tl\s}$. It remains to show that $e_j^{\tl\s}(u)v^{\tl\s}=0$ for $j\ne i$ which follows from the fact that $e_j^{\tl\s}(u)=e_j^{\s}(u)$ and $[e_j^{\s}(u),f_i^\s(w)]=0$. 
\end{proof}

\begin{rem}
Note that the condition that $\zeta^\s_i(u)/\zeta^\s_{i+1}(u)$ is a series as a rational function expanded at $\infty$ is necessary in order to perform odd reflection in this direction, see Remark \ref{rem:irr}.\qed
\end{rem}

\begin{rem}
Instead of repeating the proof of Theorem \ref{thm:11}, one can also apply Theorem \ref{thm:11} directly if we observe the follow fact. The isomorphism defined by the assignments $$d_1^{\bar\s}(u),d_2^{\bar\s}(u),e^{\bar\s}(u),f^{\bar\s}(u)\mapsto d_i^\s(u),d_{i+1}^\s(u),e_i^\s(u),f_i^\s(u),$$also satisfies the assignments $d_1^{\bar\s'}(u),d_2^{\bar\s'}(u),e^{\bar\s'}(u),f^{\bar\s'}(u)\mapsto d_i^{\tl\s}(u),d_{i+1}^{\tl\s}(u),e_i^{\tl\s}(u),f_i^{\tl\s}(u)$, where $\bar\s'=(s_{i+1},s_i)$. Indeed, this fact can be proved using Lemma \ref{eq psi map} and \eqref{eq:psi t}, cf. Example \ref{eg:mns}.\qed
\end{rem}

\subsection{Relation with Bethe ansatz}\label{sec:bae}
In this section, we explain our motivation to study odd reflections of super Yangian in relation with the fermionic reproduction procedure (also called B\"{a}cklund transformations) of Bethe ansatz equations following \cite{HLM:2019,LM:2020}. 

Following \cite{MR:2014}, define the \emph{universal rational difference operator} $\mathfrak D(u)$ as the quantum Berezinian of $1-T(u)e^{-\pa_u}$, see \cite{Naz:1991} for quantum Berezinian for standard parity sequence and \cite{HM:2020} for arbitrary parity sequence. Then
\beq\label{eq diff trans}
\mathfrak D(u)=\sum_{r=0}^\infty (-1)^r \fkT_r(u)e^{-r\pa_u},
\eeq
see e.g. \cite[Theorem 2.13]{MR:2014}, where $\fkT_r(u)$ are series in $u^{-1}$ with coefficients in $\YglMN$. Note that with the identification of super Yangians with different parity sequences as in Section \ref{sec:odd}, the quantum Berezinian is independent of the parity sequence, see \cite[Proposition 3.6]{HM:2020}. We call $\fkT_r(u)$ {\it transfer matrices}. The transfer matrices commute with each other.

Let $L(\bm\zeta)$ be a finite-dimensional irreducible $\YglMN$-module of highest $\ell$-weight $\bm \zeta=(\zeta_i(u))_{i\in \bar I}$. One of the main problems in Bethe ansatz is to find the spectra of the universal rational difference operator $\mathfrak D(u,\tau)$ (joint spectra of transfer matrices) acting on the space $L(\bm\zeta)$.

Let $\bm l=(l_i)_{i\in I}$ be a sequence of non-negative integers. Let $ \bm t=(t_{j}^{(i)})$, $i\in I$, $j=1,\dots,l_i$,  be a sequence of complex numbers. Define monic polynomials 
$$
y_i(u)=\prod_{j=1}^{l_i}(u-t_j^{(i)}),
$$
and set $\bm y=(y_i)_{i\in I}$.

The {\it Bethe ansatz equation} associated to $\bm \zeta$, $\bm s$ is a system of algebraic equations in $\bm y$ (actually in $\bm t$) given by
\beq\label{eq:bae}
\frac{\zeta_i(t_j^{(i)})}{\zeta_{i+1}(t_j^{(i)})}\frac{y_{i-1}(t_j^{(i)}+s_i)}{y_{i-1}(t_j^{(i)})}\frac{y_{i}(t_j^{(i)}-s_i)}{y_{i}(t_j^{(i)}+s_{i+1})}\frac{y_{i+1}(t_j^{(i)})}{y_{i+1}(t_j^{(i)}-s_{i+1})}=1,
\eeq
where $i\in I$ and $j=1,\dots,l_i$. It is known that when the Bethe ansatz equation is satisfied, one can construct the {\it Bethe vector} $\mathbb B_{\bm l}(\bm y)\in L(\bm\zeta)$ (depending on parameters $\bm t$) which is shown to be an eigenvector (if it is nonzero) of the first transfer matrix $\fkT_1(u)$, see \cite{BR:2008}. Indeed, the Bethe vector is also expected to be an eigenvector of all transfer matrices. 

Let $y_0(u)=y_{m+n}(u)=1$. Define a rational difference operator $\mathfrak D(u,\bm \zeta,\bm y,\s)$ by
\beq\label{eq bae eigenvalue}
\mathfrak D(u,\bm \zeta,\bs y)=\mathop{\overrightarrow\prod}\limits_{1\lle i\lle m+n}\Big(1-\zeta_i(u)\cdot\frac{y_{i-1}(u+s_i)y_i(u-s_i)}{y_{i-1}(u)y_i(u)}\, e^{-\pa_u}\Big)^{s_i}.
\eeq

\begin{conj}[{\cite[Conjecture 5.15]{LM:2020}}]\label{thm bae}
If $\bm t$ satisfies the Bethe ansatz equation \eqref{eq:bae}, then we have
\[
\mathfrak D(u)\, \mathbb B_{\bm l}(\bm y)=\mathfrak D(u,\bm \zeta,\bs y,\s)\, \mathbb B_{\bm l}(\bm y).\qedd
\]
\end{conj}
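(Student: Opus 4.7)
The plan is to prove the conjecture by reducing to the standard parity sequence via the odd reflection/fermionic reproduction machinery developed in Sections \ref{sec:odd11} and \ref{sec:odd}, and then handling the standard case by the nested algebraic Bethe ansatz. The first ingredient is invariance of the universal rational difference operator $\mathfrak D(u)$ under the identifications of Section \ref{sec:id}: this is already recorded in \cite[Proposition 3.6]{HM:2020}, so the action of $\mathfrak D(u)$ on any vector of $L(\bm\zeta)$ is intrinsic and does not depend on how $L(\bm\zeta)$ is presented.

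The second ingredient is a matching invariance of the eigenvalue $\mathfrak D(u,\bm\zeta,\bm y,\s)$. Fix $i\in I$ with $s_i\ne s_{i+1}$ and let $\tl\s$ be the swap. Given $(\bm\zeta,\bm y)$, define $\tl{\bm\zeta}$ by \eqref{eq:tranmn}, where the polynomials $\varphi_i^\s,\psi_i^\s$ controlling $\zeta_i^\s/\zeta_{i+1}^\s$ are, in view of the BAE \eqref{eq:bae} at level $i$, governed by the neighboring $y_{i-1},y_i,y_{i+1}$; let $\tl{\bm y}$ be the fermionic reproduction of $\bm y$, which modifies only the $i$-th polynomial $y_i$. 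A direct calculation using \eqref{eq bae eigenvalue} then shows that the two factors in the ordered product at positions $i$ and $i+1$, having opposite-sign exponents $s_i,s_{i+1}$, collapse to give
\be
\mathfrak D(u,\tl{\bm\zeta},\tl{\bm y},\tl\s)=\mathfrak D(u,\bm\zeta,\bm y,\s).
\ee
The third ingredient is compatibility of the Bethe vectors themselves: $\mathbb B_{\bm l}(\bm y)$ built from $v^\s$ using $\s$-creation operators should coincide, up to a nonzero scalar, with $\mathbb B_{\tl{\bm l}}(\tl{\bm y})$ built from $v^{\tl\s}=f_{i,1}^\s\cdots f_{i,k}^\s v^\s$ using $\tl\s$-creation operators. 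This is to be established by rewriting both constructions in the Drinfeld current generators via the quasi-determinant formulas of Section \ref{sec:quasi} and applying the identities \eqref{eq:currentmn}.

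Combining the three ingredients, the conjecture for $\s$ becomes equivalent to the conjecture for $\tl\s$; iterating over adjacent swaps reduces it to the standard parity sequence. In that case, the eigenvalue statement for the first transfer matrix $\fkT_1(u)$ is \cite{BR:2008}, and the extension to all $\fkT_r(u)$, equivalently to the full quantum Berezinian $\mathfrak D(u)$, follows from the nested algebraic Bethe ansatz together with the expansion \eqref{eq diff trans}.

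The main obstacle is the compatibility of Bethe vectors across parity sequences. Theorem \ref{thm:mn} gives a clean description of the highest weight vector after an odd reflection, but the Bethe creation operators are intricate nested combinations whose behavior under the identifications of Section \ref{sec:id} is far from obvious, and matching the two constructions requires careful bookkeeping of orderings and Koszul signs. The cleanest route appears to be an induction on the rank that reduces each adjacent swap to the $\Yone$ subcase treated in Theorem \ref{thm:11}, using the quasi-determinant embedding \eqref{eq:psi t} and Lemma \ref{eq psi map} to localize the computation to the relevant rank-one block.
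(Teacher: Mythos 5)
This statement is presented in the paper as a \emph{conjecture} (Conjecture 5.15 of \cite{LM:2020}); the paper offers no proof of it, so there is no ``paper's proof'' to compare against. Your proposal is a reduction strategy, and the honest question is whether it actually closes the conjecture. It does not, for two reasons. First, your base case is not established: even for the standard parity sequence, what is actually known from \cite{BR:2008} is only that the Bethe vector is an eigenvector of the \emph{first} transfer matrix $\fkT_1(u)$. The extension to all $\fkT_r(u)$ --- equivalently to the full operator $\mathfrak D(u)$ via \eqref{eq diff trans} --- is precisely the open content of the conjecture; asserting that it ``follows from the nested algebraic Bethe ansatz'' is not a proof but a restatement of the claim. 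The only case where the paper records that the action of $\mathfrak D(u)$ is controlled by $\fkT_1(u)$ alone is $\gl_{1|1}$ (via \cite[Corollary 6.13]{LM:2019}), and that argument does not extend to higher rank.

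Second, your reduction step itself rests on two unproven ingredients. The identity $\mathfrak D(u,\tl{\bm\zeta},\tl{\bm y},\tl\s)=\mathfrak D(u,\bm\zeta,\bm y,\s)$ is a computation one can carry out from \eqref{eq:fermionic-reprom|n} and \eqref{eq bae eigenvalue} (the paper sketches exactly this in Section \ref{sec:bae}), so that part is fine. But the compatibility of the Bethe vectors $\mathbb B_{\bm l}(\bm y)$ and $\mathbb B_{\tl{\bm l}}(\tl{\bm y})$ up to nonzero scalar --- which you correctly flag as the main obstacle --- is not established anywhere in the paper or in the cited literature in the generality you need (what is known, e.g.\ \cite[Lemma 5.5]{HLM:2019}, concerns equality of $\fkT_1(u)$-eigenvalues, not proportionality of the vectors themselves, and is proved for tensor products of evaluation modules rather than arbitrary finite-dimensional irreducibles). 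Since both the base case and a key step of the induction are themselves open, the proposal is a plausible research program rather than a proof, and you should present it as such.
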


\subsubsection{The case of $\gl_{1|1}$}
Let $\s$ be a parity sequence in $S_{1|1}$ and $\tl\s$ the other parity sequence in $S_{1|1}$. Recall $\bm\zeta^\s=(\zeta^\s_1(u),\zeta^\s_2(u))$, the  finite-dimensional irreducible $\mathrm{Y}(\gl_{1|1}^\s)$-module $L(\bm\zeta^\s)$, and the coprime monic polynomials $\varphi^\s(u)$ and $\psi^\s(u)$ such that $\zeta_1^\s(u)/\zeta^\s_2(u)=\varphi^\s(u)/\psi^\s(u)$.

The Bethe ansatz equation in this case is reduced to $\zeta_1^\s(t_j)/\zeta_2^\s(t_j)=1$ where $y(u)=(u-t_1)\cdots(u-t_l)$. Here we drop the upper indices since the rank is 1. Instead of using this form, we assume that the Bethe ansatz equation is equivalent to the condition that the polynomial $y(u)$ divides the polynomial $\varphi^\s(u)-\psi^\s(u)$, see e.g. \cite{HLM:2019,LM:2019}. Hence there exists another monic polynomial $\tl y(u)$ satisfying 
\beq\label{eq:fermionic-repro}
y(u)\tl y(u+s_1)\propto (\varphi^\s(u)-\psi^\s(u)).
\eeq 
By \cite[Lemma 4.3]{HLM:2019}, the monic polynomial $\tl y(u)$ corresponds to a solution of the Bethe ansatz equation associated to the same finite-dimensional irreducible module\footnote{Note that in \cite[Lemma 4.3]{HLM:2019}, it was shown for the case of tensor products of evaluation polynomial modules. But it can be trivially generalized to any finite-dimensional $\Yone$-modules since all finite-dimensional irreducible $\Yone$-modules are essentially tensor products of evaluation modules.} with the parity sequence $\tl\s$. Moreover, the Bethe vector corresponding to this solution is an eigenvector of the first transfer matrix with the same eigenvalue corresponding to the Bethe vector for $y(u)$, see \cite[Lemma 4.4]{HLM:2019}. According to \cite[Corollary 6.13]{LM:2019}, the action of the universal rational difference operator for $\Yone$ is essentially determined by the action of the first transfer matrix. Therefore, we have
\begin{align}
\Big(1-\zeta_1^{\s}(u)\frac{y(u-s_1)}{y(u)}e^{-\pa_u}\Big)^{s_1}&\Big(1-\zeta_2^{\s}(u)\frac{y(u+s_2)}{y(u)}e^{-\pa_u}\Big)^{s_2} \label{eq:oper-one}\\
=\ &\Big(1-\zeta_1^{\tl\s}(u)\frac{\tl y(u-\tl s_1)}{\tl y(u)}e^{-\pa_u}\Big)^{\tl s_1}\Big(1-\zeta_2^{\tl\s}(u)\frac{\tl y(u+\tl s_2)}{\tl y(u)}e^{-\pa_u}\Big)^{\tl s_2},\nonumber
\end{align}
cf. \cite[Lemma 4.5]{HLM:2019}. Now a direct computation using \eqref{eq:fermionic-repro} and \eqref{eq:oper-one}, cf. \cite[Lemma 2.2]{HLM:2019}, implies that
\[
\zeta_1^{\tl\s}(u)=\zeta_2^\s(u)\frac{\psi^\s(u-s_1)}{\psi^\s(u)},\qquad \zeta_2^{\tl\s}(u)=\zeta_1^\s(u)\frac{\varphi^\s(u-s_1)}{\varphi^\s(u)}.
\]
which are exactly the formulas in Theorem \ref{thm:11}.

\subsubsection{The case of $\gl_{m|n}$}
For the general $\gl_{m|n}$ case, let $\s$ be a parity sequence in $S_{m|n}$. Fix $i\in I$ and assume that $s_i\ne s_{i+1}$. Let $\tl\s$ be the parity sequence in $S_{m|n}$ obtained by switching $s_i$ and $s_{i+1}$. Let $\bm\zeta^{\s}=(\zeta^\s_i(u))_{i\in \bar I}$ be an $\ell^\s$-weight and $L(\bm\zeta^{\s})$ the corresponding irreducible highest $\ell^\s$-weight module.

If $\bm y$ satisfies the Bethe ansatz equation associated to $\bm\zeta^{\s}$ and $\s$, then by \cite[Theorem 5.1]{HLM:2019} there exists a monic polynomial $\tl y_i$ such that
\beq\label{eq:fermionic-reprom|n}
y_i(u)\tl y_i(u+s_i)\propto \big(\varphi_i^\s(u)y_{i-1}(u+s_i)y_{i+1}(u)-\psi_i^\s(u)y_{i-1}(u)y_{i+1}(u-s_{i+1})\big),
\eeq
where $\varphi_i^\s(u)$ and $\psi_i^\s(u)$ are relatively prime monic polynomials in $u$ such that
\[
\frac{\varphi_i^\s(u)}{\psi_i^\s(u)}=\frac{\zeta^\s_{i}(u)}{\zeta^\s_{i+1}(u)},
\] 
see \eqref{eq:rat-cond}. If $L(\bm\zeta^\s)$ is finite-dimensional irreducible, then such $\varphi_i^\s(u)$ and $\psi_i^\s(u)$ always exist. Moreover, the new sequence of polynomials $\bm y^{[i]}=(y_1,\dots,\tl y_{i},\dots,y_{m+n-1})$ satisfies the Bethe ansatz equation associated to $\bm\zeta^{\tl\s}$ and $\tl\s$. Furthermore, the Bethe vector corresponding to $\bm y^{[i]}$ is an eigenvector of the first transfer matrix with the same eigenvalue corresponding to the Bethe vector for $\bm y$, see \cite[Lemma 5.5]{HLM:2019}. It is natural to expect that the action of the universal rational difference operator on these two Bethe vectors are the same, namely
\beq\label{eq:operm|n}
\mathfrak D(u, \bm \zeta^\s,\bs y,\bm s)=\mathfrak D(u, \bm \zeta^{\tl\s},\bs y^{[i]},\tl\s),
\eeq
cf. \cite[Theorem 5.3]{HLM:2019}. 

Comparing the linear factorizations of these two operators as in \eqref{eq bae eigenvalue}, then one would suggest that $\zeta_j^{\tl\s}(u)=\zeta_j^\s(u)$ if $j\ne i, i+1$. Assuming this and cancelling common factors in \eqref{eq:operm|n}, the equality \eqref{eq:operm|n} is reduced to
\begin{align*}
&\ \Big(1-\zeta_i^{\s}(u)\frac{y_{i-1}(u+s_i)y_i(u-s_i)}{y_{i-1}(u)y_i(u)}e^{-\pa_u}\Big)^{s_i}\Big(1-\zeta_{i+1}^{\s}(u)\frac{y_{i}(u+s_{i+1})y_{i+1}(u-s_{i+1})}{y_{i}(u)y_{i+1}(u)}e^{-\pa_u}\Big)^{s_{i+1}}\\
=&\ \Big(1-\zeta_i^{\tl\s}(u)\frac{y_{i-1}(u+\tl s_i)\tl y_i(u-\tl s_i)}{y_{i-1}(u)\tl y_i(u)}e^{-\pa_u}\Big)^{\tl s_i}\Big(1-\zeta_{i+1}^{\tl\s}(u)\frac{\tl y_{i}(u+\tl s_{i+1})y_{i+1}(u-\tl s_{i+1})}{\tl y_{i}(u)y_{i+1}(u)}e^{-\pa_u}\Big)^{\tl s_{i+1}}.
\end{align*}
Similarly, using \eqref{eq:fermionic-reprom|n}, a direct computation (or immediately from the computation of the case of $\gl_{1|1}$ with proper substitutions) implies the equality \eqref{eq:tranmn}. Hence again the transition rule obtained from the fermionic reproduction procedure of Bethe ansatz equation is compatible with the transition rule obtained from representation theory. 

\begin{rem}
The fermionic reproduction procedure of Gaudin-type Bethe ansatz equations associated with orthosymplectic Lie superalgebras has been introduced in \cite{LM:2021} which turns out to be compatible with the odd reflections for orthosymplectic Lie superalgebras. We expect that similar fermionic reproduction procedure of XXX-type Bethe ansatz equations associated with orthosymplectic Lie superalgebras will also be compatible with the odd reflections for orthosymplectic Yangians, cf. \cite{AAC:2003,M:2021b}. \qed
\end{rem}

\subsection{A discussion on change of $q$-characters}\label{sec:alg}
Unlike the $\glMN$ case, where the Borel subalgebra varies under the odd reflections while the Cartan subalgebra remains the same, there is no need to discuss the change of characters for $\glMN$-modules under the odd reflections. However, the Gelfand-Tsetlin subalgebra do depends on the parity sequence $\s$. Hence it is natural to consider how the $q$-character of a finite-dimensional $\YglMN$-module $M$ changes with respect to odd reflections.

Given $i\in I$, let $\s$ be a parity sequence in $S_{m|n}$ such that $s_i\ne s_{i+1}$. Let $\tl\s$ be the parity sequence in $S_{m|n}$ obtained by switching $s_i$ and $s_{i+1}$. 

We need the following well known lemma, see e.g. \cite[Proposition 3.4]{L:2021a} for a proof in the super case. For each $i\in I$ and $a\in\C$, define the \emph{simple $\ell^\s$-root} $A^\s_{i,a}\in \mathfrak B$ by
\[
(A^\s_{i,a})_j(u)=\frac{u-a}{u-a-(\alpha_i,\epsilon_j)},\qquad j\in \bar I.
\]
\begin{lemma}\label{lem:affineroot}
Let $M$	be a finite-dimensional $\YglMN$-module. Pick and fix any $i\in I$. Let $(\bm \mu,\bm \nu)$ be a pair of $\ell$-weights of $M$.
\begin{enumerate}
\item  If $e_{i,r}(M_{\bm\mu})\cap M_{\bm\nu}\ne \{0\}$ for some $r\gge 1$, then $\bm \nu=\bm\mu A_{i,a}$ for some $a\in \C$.
\item If $f_{i,r}(M_{\bm\mu})\cap M_{\bm\nu}\ne \{0\}$ for some $r\gge 1$, then $\bm \nu=\bm\mu A_{i,a}^{-1}$ for some $a\in \C$.\qed
\end{enumerate}
\end{lemma}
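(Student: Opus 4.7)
I will prove (1); part (2) follows by the same argument using \eqref{eq com ef} in place of \eqref{eq com de}, or from (1) via the Yangian anti-automorphism $\eta_{m|n}$ of \eqref{eq iso antipode}. The strategy is the Frenkel--Mukhin-type computation adapted to the super setting in \cite[Proposition 3.4]{L:2021a}; the super structure enters only through the form $(\ ,\ )$ on $\mathfrak h^*$, so the calculation is structurally identical to the even case.

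The first step is to convert \eqref{eq com de} into a rational commutation of generating series. A direct resummation of \eqref{eq com de} with weights $u^{-r} v^{-s}$ yields
\begin{equation*}
(u-v)\,[d_j(u), e_i(v)] = (\epsilon_j,\alpha_i)\, d_j(u)\bigl(e_i(v) - e_i(u)\bigr)
\end{equation*}
in $\YglMN[[u^{-1}, v^{-1}]]$, which rearranges to the shift form
\begin{equation*}
d_j(u)\, e_i(v) = \frac{u-v}{u-v-(\epsilon_j,\alpha_i)}\, e_i(v)\, d_j(u) - \frac{(\epsilon_j,\alpha_i)}{u-v-(\epsilon_j,\alpha_i)}\, d_j(u)\, e_i(u).
\end{equation*}

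Next I will apply this identity to a generalized joint eigenvector $x \in M_{\bm\mu}$. Since $d_j(u)$ acts on $M_{\bm\mu}$ as $\mu_j(u)\cdot\mathrm{id}$ modulo a nilpotent operator, the substitution $d_j(u) x \equiv \mu_j(u) x$ turns the shift identity into an approximate formula for $d_j(u)(e_i(v) x)$ in which the rational factor $\frac{u-v}{u-v-(\epsilon_j,\alpha_i)}$ encodes a potential $\ell$-weight shift. Because $M$ is finite-dimensional, the operator-valued series $e_i(v)$ restricted to $M_{\bm\mu}$ is a rational function of $v$, so one has a partial-fraction decomposition $e_i(v) x = \sum_{k}(v-a_k)^{-1} c_k + (\text{polynomial part})$ over finitely many complex poles $a_k$. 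Evaluating the shift identity at each simple pole $v = a_k$ shows that the residue $c_k$ satisfies
\begin{equation*}
d_j(u)\, c_k \equiv \mu_j(u)\cdot\frac{u-a_k}{u-a_k-(\epsilon_j,\alpha_i)}\, c_k
\end{equation*}
modulo nilpotents; that is, $c_k \in M_{\bm\mu A_{i,a_k}}$. Therefore, if $0 \neq w = e_{i,r} x \in M_{\bm\nu}$, then $w$ lies in the span of residues $c_k$ with $\bm\mu A_{i,a_k} = \bm\nu$, forcing at least one such $a_k$ to exist; hence $\bm\nu = \bm\mu A_{i,a}$ for some $a \in \C$, as required.

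The only non-routine input is the rationality claim used above: the restriction of $e_i(v)$ to a finite-dimensional Yangian module is a rational function of $v$ with finitely many poles, each of whose residues lies in a single generalized $\ell$-weight space. This is a standard consequence of $\dim\End(M)<\infty$ combined with the RTT structure of $\YglMN$; once it is granted, the remainder of the argument is purely a comparison of rational functions in $u$.
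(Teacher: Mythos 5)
The paper itself offers no proof of this lemma: it is quoted as ``well known,'' with the argument deferred to \cite[Proposition 3.4]{L:2021a}. Your proposal reconstructs the standard Knight/Frenkel--Mukhin argument, which is exactly the route taken in that reference, so there is no divergence of method to report. Your resummation of \eqref{eq com de} into $(u-v)[d_j(u),e_i(v)]=(\epsilon_j,\alpha_i)\,d_j(u)\bigl(e_i(v)-e_i(u)\bigr)$ checks out (the sum over $t$ telescopes against $(u-v)$), the rearranged shift identity is correct, and the key observation that the inhomogeneous term $-\tfrac{c}{u-v-c}\,d_j(u)e_i(u)x$ contributes nothing to the residue at $v=a_k$ (its only $v$-pole is at $v=u-c$) is the right computation; since the form is symmetric, $(\epsilon_j,\alpha_i)=(\alpha_i,\epsilon_j)$, so the resulting factor is indeed $(A_{i,a_k})_j(u)$.

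Two steps need repair. First, your justification of the rationality of $v\mapsto e_i(v)x$ is not correct as stated: finite-dimensionality of $\End(M)$ does not make a formal power series with coefficients in a finite-dimensional space rational (e.g.\ $\sum_s e^{s^2}v^{-s}w$). The actual reason is again \eqref{eq com de}: taking $j=i$, $r=2$ gives $e_{i,s+1}=s_i^{-1}[d_{i,2},e_{i,s}]-d_{i,1}e_{i,s}$, so the maps $U_s:M_{\bm\mu}\to M$, $y\mapsto e_{i,s}y$, satisfy a constant-coefficient linear recursion $U_{s+1}=AU_s-U_sB$ in the finite-dimensional space $\mathrm{Hom}(M_{\bm\mu},M)$, whence $\sum_s U_sv^{-s}=v^{-1}(1-L/v)^{-1}U_1$ is rational. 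Second, the concluding step silently assumes simple poles and an honest (rather than generalized) eigenvector $x$: the coefficient $e_{i,r}x$ is a combination of \emph{all} principal-part coefficients, not just residues, so for a pole of order $d>1$ you must show the full principal part at $a_k$ lies in $M_{\bm\mu A_{i,a_k}}$ (leading coefficient first, then descending induction), and the nilpotent part of $d_j(u)|_{M_{\bm\mu}}$ requires an induction on the nilpotency filtration rather than a ``modulo nilpotents'' disclaimer. (Also, for part (2) the relevant relation is the $[d_{i,r},f_{j,s}]$ half of \eqref{eq com de}, not \eqref{eq com ef}.) All of these repairs are routine, and with them your argument is a complete proof matching the cited one.
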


Like the previous discussions, the problem is reduced to the $\Yone$ case. Let $m=n=1$, define a partial order on $\ell^\s$-weights. Let $(\bm \mu,\bm \nu)$ be a pair of $\ell^\s$-weights. We say that $\bm\nu >^\s \bm \mu$ if $\bm\nu =\bm\mu A_{a_1}^\s\cdots A_{a_r}^\s$ for some $r\in \Z_{>0}$ and $a_j\in \C$, $1\lle j\lle r$. Note that in this case $(\alpha_1,\epsilon_1)=(\alpha_1,\epsilon_2)=s_1$.

Give a finite-dimensional irreducible $\Yone$-module $M$ and suppose we know explicitly its $q$-character $\chi^\s(M)$.  Let
\[
\mathscr Q^\s=\chi^\s(M)=\sum_{\bm\xi}a_{\bm\xi}[\bm\xi],\quad \mathscr Q^{\tl\s}=0
\]
where $a_{\bm \xi}\in\Z_{\gge 0}$. We give an algorithm to compute the $q$-character $\chi^{\tl\s}(M)$ after the odd reflections as follows.
\begin{enumerate}
\item Pick a maximal $\ell^\s$-weight $\bm\zeta =(\zeta_1(u),\zeta_2(u))$ with respect to the partial order $>^\s$ such that $a_{\bm\zeta}>0$. Let $\varphi(u)$ and $\psi(u)$ be coprime monic polynomials such that $\zeta_1(u)/\zeta_2(u)=\varphi(u)/\psi(u)$ and set $k=\deg\varphi(u)$.
\item Update $\mathscr Q^\s$ and $\mathscr Q^{\tl\s}$ by the rule:
\[
\mathscr Q^\s=\mathscr Q^\s-\sum_{J\subset \{1,\dots,k\}}[\bm\zeta]\cdot\left[\Big(\frac{\varphi_{J}(u-s_1)}{\varphi_{J}(u)},\frac{\varphi_{J}(u-s_1)}{\varphi_{J}(u)}\Big)\right],
\]
\[
\mathscr Q^{\tl\s}=\mathscr Q^{\tl\s}+\sum_{J\subset \{1,\dots,k\}}\left[\Big(\zeta_2(u)\frac{\psi(u-s_1)}{\psi(u)},\zeta_1(u)\frac{\varphi(u-s_1)}{\varphi(u)}\Big)\right]\cdot \left[\Big(\frac{\psi_J(u)}{\psi_J(u-s_1)},\frac{\psi_J(u)}{\psi_J(u-s_1)}\Big)\right],
\]
see Lemma \ref{lem:char-1-1} and Theorem \ref{thm:11}. Here $\varphi_J(u)$ and $\psi_J(u)$ are defined as in Section \ref{sec:odd11}.
\item Repeat Steps 1 and 2 until $\mathscr Q^\s=0$.
\end{enumerate}
Then $\chi^{\tl\s}(M)=\mathscr Q^{\tl\s}$.
\begin{rem}
In combination with the algorithm for even case from \cite[Section 5.5]{FM:2001}, we expect that Lemma \ref{lem:char-1-1} can be used to construct $q$-characters for super Yangian $\YglMN$.\qed
\end{rem}

\section{Skew representations}\label{sec skew}
Let $m'$ and $n'$ be nonnegative integers. Let $\s'\in S_{m'|n'}$. Consider the embedding of $\gl_{m'|n'}^{\s'}$ into $\gl_{m'+m|n'+n}^{\s'\s}$ sending $e_{ij}^{\s'}$ to $e_{ij}^{\s'\s}$ for $1\lle i,j\lle m'+n'$. 

Let $\la$ and $\mu$ be an $(m'+m|n'+n)$-hook partition and an $(m'|n')$-hook partition, respectively. Suppose further that $\la_i\gge \mu_i$ for all $i\in \Z_{>0}$. Consider the skew Young diagram $\la/\mu$.

Let $\mu^{\s'}$ be the $\gl_{m'|n'}^{\s'}$-weight corresponding to $\mu$, and let $\la^{\s'\s}$ be the $\gl_{m'+m|n'+n}^{\s'\s}$-weight corresponding to $\la$. We have the finite-dimensional irreducible $\gl_{m'+m|n'+n}^{\s'\s}$-module $L(\la^{\s'\s})$.  Consider $L(\la^{\s'\s})$ as a $\gl_{m'|n'}^{\s'}$-module. In the following, we shall drop $\s$ and $\s'$ for brevity.

Define $L(\la/\mu)$  to be the subspace of $L(\la)$ by
\[
L(\la/\mu):=\{v\in L(\la)~|~e_{ii}v=\mu(e_{ii})v,\ e_{jk}v=0,\, 1\lle i\lle m'+n',\ 1\lle j<k\lle m'+n'\}.
\]
The subspace $L(\la/\mu)$ has a natural $\mathrm{U}(\gl_{m'+m|n'+n})^{\gl_{m'|n'}}$-module structure.

Recall $\psi_{m'|n'}$ from \eqref{eq:psi}. Regard $\mathrm{Y}(\gl_{m'|n'})$ as the subalgebra of $\mathrm Y(\gl_{m'+m|n'+n})$ via the natural embedding $t_{ij}(u)\mapsto t_{ij}(u)$ for $1\lle i,j\lle m'+n'$. 

\begin{lemma}[{\cite[Lemma 4.3]{P:2016}}]\label{lem commute yangian}
The subalgebra $\mathrm{Y}(\gl_{m'|n'})$ of $\mathrm Y(\gl_{m'+m|n'+n})$ supercommutes with the image of $\YglMN$ under the map $\psi_{m'|n'}$.
\end{lemma}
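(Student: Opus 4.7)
The plan is to derive the statement directly from the RTT relation in block form. I would write the big $T$-matrix in the block decomposition associated with the splitting $\{1,\dots,m'+n'\}\sqcup\{m'+n'+1,\dots,m'+n'+m+n\}$ as
\[
T(u)=\begin{pmatrix} A(u) & B(u)\\ C(u) & D(u)\end{pmatrix},
\]
and use the block Gauss factorisation $T(u)=L(u)\,\mathrm{diag}(A(u),S(u))\,U(u)$, with $L$, $U$ block unitriangular and Schur complement $S(u)=D(u)-C(u)A(u)^{-1}B(u)$. The quasi-determinant identities of \cite{GGRW:2005} together with the explicit formula \eqref{eq:psi t} then identify $\psi_{m'|n'}(t_{ij}(u))$ with the $(i,j)$-entry of $S(u)$, up to the signs dictated by the super convention. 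In particular, the image $\psi_{m'|n'}(\YglMN)$ is the subalgebra generated by the entries of $S(u)$, whereas $\mathrm{Y}(\gl_{m'|n'})$ is the subalgebra generated by the entries of $A(u)$.

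Next, I would exploit the RTT relation in block form, specialising indices so that one tensor slot lies in the upper-left block and the other in the lower-right. This produces explicit formulas for the supercommutators
\[
[A(u_1)_{pq},D(u_2)_{ij}\},\quad [A(u_1)_{pq},B(u_2)_{lj}\},\quad [A(u_1)_{pq},C(u_2)_{il}\},
\]
as polynomial expressions in the entries of $A$, $B$, $C$. Plugging these into $S(u_2)=D(u_2)-C(u_2)A(u_2)^{-1}B(u_2)$ and moving $A(u_1)_{pq}$ across one factor at a time, the three cross-contributions (from $D$, from $CA^{-1}B$, and from the supercommutator of $A(u_1)$ with $A(u_2)^{-1}$, itself computed from the $A$-block RTT relation, which is just the defining relation of $\mathrm{Y}(\gl_{m'|n'})$) cancel pairwise in the super sense. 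This yields $[A(u_1)_{pq},S(u_2)_{ij}\}=0$ on generating series, and hence the desired supercommutation of the two subalgebras.

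The main obstacle will be the bookkeeping of the $\Z_2$-signs produced by the parity sequence $\s'\s$ each time an odd entry of $B$, $C$, or $A^{-1}$ is moved past another odd entry. To keep the argument clean I would work at the level of the matrix notation \eqref{eq matrix notation}, absorbing all signs into a single $R$-matrix identity rather than manipulating individual $t_{ij}^{(r)}$'s. A conceptually cleaner alternative, closer in spirit to the proof of \cite[Lemma 4.3]{P:2016}, is to use the factorisation $\psi_{m'|n'}=\eta_{m'+m|n'+n}\circ\varphi_{m'|n'}\circ\eta_{m|n}$: applying $\eta^{-1}_{m'+m|n'+n}$ transports the problem to a statement about two disjoint block entries of $T(-u)^{-1}$, which follows almost immediately from the RTT relation written for $T(-u)^{-1}$.
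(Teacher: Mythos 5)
You should first note that the paper does not prove this lemma at all: it is stated as a quotation of \cite[Lemma 4.3]{P:2016}, so there is no in-text argument to compare against. Judged on its own merits, your proposal is correct, and the ``conceptually cleaner alternative'' you relegate to the last sentence is in fact the proof of the cited reference (and of its even-case prototype in Molev's book): since $\psi_{m'|n'}=\eta_{m'+m|n'+n}\circ\varphi_{m'|n'}\circ\eta_{m|n}$, the series $\psi_{m'|n'}(t_{ij}(u))$ is the $(i,j)$-entry of the inverse of the lower-right block $\widetilde D(u)$ of $T(u)^{-1}$, which is exactly your Schur complement $S(u)=D(u)-C(u)A(u)^{-1}B(u)$; multiplying the RTT relation on both sides by $T_2(u_2)^{-1}$ yields a commutation relation between $t_{pq}(u_1)$ and the entries $\tilde t_{kl}(u_2)$ of $T(u_2)^{-1}$ whose right-hand side is supported on the Kronecker deltas $\delta_{pl}$ and $\delta_{kq}$, and these vanish identically when $p,q\lle m'+n'<k,l$. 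Hence every entry of $\widetilde D$ supercommutes with $\mathrm Y(\gl_{m'|n'})$, and supercommutation passes to the entries of $\widetilde D^{-1}=S$ because the superbracket with a fixed homogeneous element is a superderivation. I would lead with that route. Your primary route --- computing $[A(u_1)_{pq},D(u_2)_{ij}]$, $[A(u_1)_{pq},B(u_2)_{lj}]$, $[A(u_1)_{pq},C(u_2)_{il}]$ from the block RTT relation and checking cancellation in $[A(u_1)_{pq},S(u_2)_{ij}]$ --- is not wrong, but as written the ``pairwise cancellation'' is only asserted, and it is precisely the whole content of the lemma: note that $[A(u_1)_{pq},D(u_2)_{ij}]$ is itself nonzero, being proportional to $(u_1-u_2)^{-1}\bigl(C(u_1)_{iq}B(u_2)_{pj}-C(u_2)_{iq}B(u_1)_{pj}\bigr)$ up to sign, so nothing cancels termwise and the bookkeeping you worry about is genuinely delicate. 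In short: promote your second paragraph to the proof and drop the first.
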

Recall the evaluation map $\pi$, see \eqref{eq:evaluation-map}. The following is straightforward from Lemma \ref{lem commute yangian}. 

\begin{corollary}\label{prop centralizer}
The image of the homomorphism $$\pi_{m'+m|n'+n}\circ \psi_{m'|n'}:\YglMN\to \mathrm{U}(\gl_{m'+m|n'+n})$$ supercommutes with the subalgebra $\mathrm{U}(\gl_{m'|n'})$ in $\mathrm{U}(\gl_{m'+m|n'+n})$.
\end{corollary}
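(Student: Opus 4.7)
The plan is to deduce this corollary directly from Lemma \ref{lem commute yangian} by applying the evaluation homomorphism $\pi_{m'+m|n'+n}$. Since the evaluation map is a (parity-preserving) homomorphism of associative superalgebras, it automatically sends pairs of supercommuting elements to pairs of supercommuting elements; the entire argument is therefore formal once this observation is recorded.

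First, I would identify the image of the subalgebra $\mathrm{Y}(\gl_{m'|n'})\subset \mathrm{Y}(\gl_{m'+m|n'+n})$ under $\pi_{m'+m|n'+n}$. By the defining rule $t_{ij}(u)\mapsto \delta_{ij}+s_i e_{ij}^{\s'\s}u^{-1}$ from \eqref{eq:evaluation-map}, applied to indices $1\lle i,j\lle m'+n'$, this image is the subalgebra of $\mathrm{U}(\gl_{m'+m|n'+n}^{\s'\s})$ generated by $e_{ij}^{\s'\s}$ with $1\lle i,j\lle m'+n'$, which is precisely $\mathrm{U}(\gl_{m'|n'}^{\s'})$ under the natural embedding recorded in Section \ref{sec glmn}. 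Thus
\[
\pi_{m'+m|n'+n}\bigl(\mathrm{Y}(\gl_{m'|n'})\bigr)=\mathrm{U}(\gl_{m'|n'}).
\]

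Finally, by Lemma \ref{lem commute yangian}, the subalgebra $\mathrm{Y}(\gl_{m'|n'})$ supercommutes with $\psi_{m'|n'}(\YglMN)$ inside $\mathrm{Y}(\gl_{m'+m|n'+n})$. Pushing this supercommutation forward under the algebra homomorphism $\pi_{m'+m|n'+n}$ yields the supercommutation of $\mathrm{U}(\gl_{m'|n'})$ with $(\pi_{m'+m|n'+n}\circ\psi_{m'|n'})(\YglMN)$ inside $\mathrm{U}(\gl_{m'+m|n'+n})$, which is exactly the assertion of the corollary. There is no real obstacle to overcome: the substantive content lives entirely in Lemma \ref{lem commute yangian} (proved in \cite{P:2016}) and in the explicit form of the evaluation map $\pi_{m'+m|n'+n}$ that identifies the relevant image with $\mathrm{U}(\gl_{m'|n'})$.
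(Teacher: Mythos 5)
Your proposal is correct and matches the paper's approach: the paper states that the corollary is "straightforward from Lemma \ref{lem commute yangian}," and your argument spells out exactly that deduction — identifying $\pi_{m'+m|n'+n}(\mathrm{Y}(\gl_{m'|n'}))=\mathrm{U}(\gl_{m'|n'})$ via the explicit form of the evaluation map and pushing the supercommutation relation forward through the superalgebra homomorphism. No gaps.
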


Corollary \ref{prop centralizer} implies that the subspace $L(\la/\mu)$ is invariant under the action of the image of $\pi_{m'+m|n'+n}\circ \psi_{m'|n'}\circ \tau_{m'-n'}$, where $\tau_{m'-n'}$ is the shift automorphism defined in \eqref{eq tau z}. Therefore, $L(\la/\mu)$ is a $\YglMN$-module. We call $L(\la/\mu)$ a {\it skew representation}, cf. \cite{Che:1987}.

We study the $q$-character of the skew representation  $L(\la/\mu)$ of $\YglMN$ in the rest of this section. 

For the parity sequence $\s$, define $\kappa_1=0$ if $s_1=1$ and $\kappa_1=-1$ if $s_1=-1$. For $2\lle i\lle m+n$, define inductively $\ka_i$ by the rule: $\ka_i=\ka_{i-1}$ if $s_i\ne s_{i-1}$ and $\ka_i=\ka_{i-1}+s_i$ if $s_i= s_{i-1}$.

For each $a\in \C$ and $i\in \bar I$, let
\[
\mathscr X_{i,a}= \left(1,\dots, \big(1+(u+a+\kappa_i)^{-1}\big)^{s_i}, \dots,1\right) \in \mathfrak B.
\]
Here the only component not equal to 1 is at the $i$-th position.

The elements in $A=\{1,\dots,m,\underline 1,\dots,\underline n\}$ are rearranged to $(\mathfrak a_1,\dots,\mathfrak a_{m+n})$ as in Section \ref{sec:hook-p}.
For a semi-standard Young $\s$-tableau $\mathcal T$ of shape $\la/\mu$, recall that $\mathcal T(i,j)$ denotes the index $k$ if the element $\mathfrak a_k$ is in the box representing the pair $(i,j)\in \la/\mu$ and $c(i,j)=j-i$ is the content of the pair $(i,j)$.

It is known from \cite{CPT:2015} that the dimension of $L(\la/\mu)$ is equal to the number of semi-standard Young $\s$-tableaux of shape $\la/\mu$. The following theorem is a refinement of this statement which is a super analog of \cite[Lemma 2.1]{NT:1998}, see also \cite[Theorem 3.4]{LM:2020} for the case of standard parity sequence.
\begin{theorem}\label{thm character}
The $q$-character of the $\YglMN$-module $L(\la/\mu)$ is given by
\[
\mathscr{K}_{\la/\mu}(u):=\chi(L(\la/\mu))=\sum_{\mathcal T}\prod_{(i,j)\in\la/\mu}\mathscr X_{\mathcal T(i,j),c(i,j)},
\]
summed over all semi-standard Young tableaux $\mathcal T$ of shape $\la/\mu$. In particular, $L(\la/\mu)$ is thin.
\end{theorem}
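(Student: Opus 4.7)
The plan is to establish the character formula by induction on the parity sequence $\s$, starting from the standard parity sequence $\s_0=(1,\dots,1,-1,\dots,-1)$ where the desired expression is \cite[Theorem 3.4]{LM:2020}, and propagating it to an arbitrary $\s$ via a sequence of elementary odd reflections. Proposition \ref{prop iso} identifies $L(\la/\mu)$ as the same $\YglMN$-module for any two parity sequences, so the only thing that changes is the Gelfand–Tsetlin subalgebra generating the coefficients of the $d_i^\s(u)$, and hence the $q$-character itself.

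For the inductive step, fix $i\in I$ with $s_i\ne s_{i+1}$ and let $\tl\s$ be obtained from $\s$ by switching $s_i,s_{i+1}$. I would pass from $\chi^\s(L(\la/\mu))$ to $\chi^{\tl\s}(L(\la/\mu))$ using the algorithm of Section \ref{sec:alg}, whose essential input is the $\gl_{m|n}$ transition rule of Theorem \ref{thm:mn} applied to every highest $\ell^\s$-weight submodule for the embedded $\Yone$-copy indexed by $i$. By Lemma \ref{eq psi map} and \eqref{eq:currentmn}, this $\Yone$-reduction only touches the $i$-th and $(i+1)$-st components of the $\ell$-weights, so the remaining components of $\mathscr X_{\mathcal T(i,j),c(i,j)}$ are left untouched.

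The combinatorial heart of the induction is a weight-preserving bijection between semi-standard Young $\s$-tableaux and $\tl\s$-tableaux of shape $\la/\mu$ under which the block decomposition along $i$ versus $i+1$ entries matches the odd reflection decomposition. Concretely, for a fixed configuration of entries $\not\in\{\mathfrak a_i,\mathfrak a_{i+1}\}$, the remaining boxes form a sub-shape on which the letters $\mathfrak a_i,\mathfrak a_{i+1}$ are to be distributed; one of these letters is strictly increasing along rows (the odd one) while the other is strictly increasing along columns (the even one). Swapping $s_i$ and $s_{i+1}$ exactly swaps these two strictness conditions, so the admissible fillings on the sub-shape get reshuffled. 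The highest $\ell^\s$-weight vector of the corresponding embedded $\Yone$-module contributes one monomial computed from the contents of this sub-shape, which is exactly the polynomial $\varphi_i^\s(u)$ in the fermionic reproduction \eqref{eq:fermionic-reprom|n}, while $\psi_i^\s(u)$ encodes the complementary contents; this aligns the $\mathscr X_{i,a}$ and $\mathscr X_{i+1,a}$ factors on both sides of Theorem \ref{thm:mn} with the boxes of contents $c(i,j)$ in the sub-shape.

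The main obstacle will be verifying this local combinatorial identity precisely, that is, checking that the polynomials $\varphi_i^\s(u),\psi_i^\s(u)$ attached to each highest $\ell^\s$-weight factor into the product over contents in exactly the pattern predicted by the tableau formula, and that the constants $\ka_i$ shift correctly under the swap (so that the formula for $\mathscr X_{\mathcal T(i,j),c(i,j)}$ remains meaningful after relabeling indices according to $\tl\s$). Once this local identity is established, the induction closes, and thinness of $L(\la/\mu)$ follows because distinct semi-standard $\s$-tableaux produce distinct monomials in the $\mathscr X_{i,a}$, as can be read off from the positions and contents of the boxes and the $\Z$-linear independence of these simple $\ell^\s$-roots.
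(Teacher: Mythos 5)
Your strategy is genuinely different from the paper's, so let me first record the contrast. The paper does not induct on the parity sequence at all: it treats an arbitrary $\s$ uniformly, using the Gelfand--Tsetlin basis $\{v_{\mc T}\}$ of $L(\la^{\s'\s})$ indexed by semi-standard Young tableaux (the branching rule of \cite{CPT:2015} for arbitrary parity sequences) and computing the eigenvalue of each $(d_k(u-\ka_k))^{s_k}$ on $v_{\mc T}$ by telescoping the central series $\prod_i(d_i(u-\ka_i))^{s_i}$ of Proposition \ref{prop center} along the chain of subalgebras (Lemma \ref{lem center action} and formula \eqref{eq:111}), in the style of \cite{NT:1998}; the skew case then follows from Lemma \ref{eq psi map} and the shift $\tau_{m'-n'}$. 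Your route --- base case for the standard parity sequence from \cite{LM:2020} plus propagation by odd reflections via Theorem \ref{thm:mn} and the algorithm of Section \ref{sec:alg} --- is non-circular (Proposition \ref{prop iso} does not rely on Theorem \ref{thm character}) and, if completed, would give a nice consistency check between the tableau combinatorics and the odd reflections; but it replaces one direct computation by a more delicate combinatorial matching.

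The gap is that the ``local combinatorial identity'' you defer is the entire content of the inductive step, and it is only asserted. To close it you would need to: (i) show that, for each fixed filling of the boxes outside $\{\mathfrak a_i,\mathfrak a_{i+1}\}$, the corresponding monomials of $\chi^{\s}$ sum to the $\ell^{\s}$-character of a single irreducible $\Yone$-constituent --- in particular identify its highest $\ell^{\s}$-weight, check that $\zeta_i^{\s}/\zeta_{i+1}^{\s}$ is a ratio of coprime monic polynomials of equal degree so that Theorem \ref{thm:mn} applies, and match $\deg\varphi_i^{\s}$ with the number of admissible two-letter fillings (which is $2^{k}$ only after the correct cancellations in $\varphi_i^{\s}/\psi_i^{\s}$); (ii) verify that $\varphi_i^{\s}$ and $\psi_i^{\s}$ factor as products of $u+c(i,j)+\ka$ over the predicted boxes, together with the bookkeeping of how $\ka_i,\ka_{i+1}$ become $\tl\ka_i,\tl\ka_{i+1}$ after the swap; and (iii) justify that the $\Yone$-algorithm computes the full $\tl\s$-$q$-character of the $\YglMN$-module, which needs the observation that $d_j^{\tl\s}=d_j^{\s}$ for $j\ne i,i+1$ commutes with the embedded $\Yone$, so the constituent decomposition refines the remaining components of the $\ell$-weights. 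None of these steps is automatic, and (i)--(ii) amount to a Bender--Knuth-type bijection in the super setting that you have not constructed. Your closing argument for thinness, by contrast, is essentially the paper's and is fine.
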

Before proving the theorem, we recall the following proposition from \cite[Theorem 2.43]{Tsy:2020}. Similar to $\kappa_i$, define $\kappa_i'$, $1\lle i\lle m'+n'$, with $m$ and $n$ replaced by $m'$ and $n'$, respectively. Set $\kappa_{m'+n'+j}'=m'-n'+\kappa_j$, $j\in \bar I$. Set $s'_i=s_{i-m'-n'}$, $m'+n'+1\lle i \lle m'+n'+m+n$. Note that $\ka_i'$ for $1\lle i\lle m'+n'+m+n$ are related to the parity sequence $\s'\s$ just as $\ka_i$ for $1\lle i\lle m+n$ are related to $\s$. 
\begin{proposition}[\cite{Gow:2007,Tsy:2020}]\label{prop center}
The coefficients of the series $\prod_{i\in \bar I}(d_i(u-\kappa_i))^{s_i}$ are central in $\YglMN$. The coefficients of the series $\prod_{i=1}^{m'+n'+m+n}(d_i(u-\kappa'_i))^{s'_i}$ are central in $\mathrm{Y}(\gl_{m'+m|n'+n})$.
\end{proposition}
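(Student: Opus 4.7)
The plan is to first reduce the second claim to the first, and then to establish the first by recognizing the series as the quantum Berezinian and reducing to the standard parity case via the isomorphisms of Section~\ref{sec:id}. Concretely, by the recursive definition of $\kappa'_i$ together with the prescription $\kappa'_{m'+n'+j}=m'-n'+\kappa_j$, a short induction on $i$ shows that $(\kappa'_i)_{i=1}^{m'+n'+m+n}$ is precisely the $\kappa$-sequence associated to the concatenated parity sequence $\s'\s$. Since the $s'_i$ are also the parity values of $\s'\s$, the second product is literally the first product applied to $\mathrm Y(\gl_{m'+m|n'+n})$ with parity $\s'\s$. So it suffices to prove the first statement.

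The key observation is that $\mathfrak Z^\s(u):=\prod_{i\in\bar I}\bigl(d_i(u-\kappa_i)\bigr)^{s_i}$ coincides with the quantum Berezinian $\mathrm{Ber}\bigl(1-T(u)e^{-\pa_u}\bigr)$ up to a harmless normalization coming from the Gauss decomposition, so it equals the leading transfer matrix $\fkT_{m+n}(u)$ (or rather its analog) in the expansion \eqref{eq diff trans}. For the standard parity sequence $\s_0$ one has $\kappa_i=i-1$ for $1\le i\le m$ and $\kappa_{m+j}=m-j$ for $1\le j\le n$, and the centrality of the coefficients of $\mathfrak Z^{\s_0}(u)$ is Nazarov's theorem \cite{Naz:1991}, proved by a direct RTT computation showing $[\mathfrak Z^{\s_0}(u),t_{ij}(v)]=0$ for all $i,j\in\bar I$. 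For an arbitrary parity sequence $\s$, I would then argue that $\mathfrak Z^\s(u)$ is invariant under the parity-change isomorphisms of Section~\ref{sec:id}, and hence centrality for the standard case propagates to all parity sequences. Every $\s\in S_{m|n}$ is reachable from $\s_0$ by a finite sequence of adjacent transpositions, so it is enough to check invariance under a single swap of $s_i$ and $s_{i+1}$ (with $s_i\ne s_{i+1}$).

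For this single-swap step, note that $d_j^\s(u)=d_j^{\tl\s}(u)$ for $j\ne i,i+1$ by the formulas of Section~\ref{sec:quasi} and Lemma~\ref{eq psi map}, and $\kappa_j^\s=\kappa_j^{\tl\s}$ for such $j$ by the recursion. Thus only the two adjacent factors of index $i$ and $i+1$ can change, and one must verify
\[
\bigl(d_i^\s(u-\kappa_i^\s)\bigr)^{s_i}\bigl(d_{i+1}^\s(u-\kappa_{i+1}^\s)\bigr)^{s_{i+1}}
=\bigl(d_i^{\tl\s}(u-\kappa_i^{\tl\s})\bigr)^{\tl s_i}\bigl(d_{i+1}^{\tl\s}(u-\kappa_{i+1}^{\tl\s})\bigr)^{\tl s_{i+1}}.
\]
Since $s_i=-s_{i+1}$, the product on each side telescopes to a ratio, and the identity becomes a direct consequence of the last equality in \eqref{eq:currentmn} after a shift of spectral parameter dictated by the behaviour of $\kappa_i^\s$ versus $\kappa_i^{\tl\s}$ under the swap. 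The bookkeeping of these shifts (controlled by the recursion $\kappa_i=\kappa_{i-1}+s_i$ or $\kappa_i=\kappa_{i-1}$ depending on whether $s_i$ equals $s_{i-1}$) is where the main combinatorial obstacle lies; I expect it to be a short but slightly delicate case analysis based on the parities of $s_{i-1}$ and $s_{i+2}$ relative to the swapped pair. Once invariance under each adjacent swap is verified, chaining isomorphisms transports the standard-parity centrality of $\mathfrak Z^{\s_0}(u)$ to the centrality of $\mathfrak Z^\s(u)$ for every $\s\in S_{m|n}$, completing the proof.
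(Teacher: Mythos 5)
The paper offers no proof of this proposition at all: it is imported as a citation, the general-parity statement being \cite[Theorem 2.43]{Tsy:2020} (where centrality is proved directly for an arbitrary parity sequence), and the second sentence following from the observation --- which you correctly reprove --- that $(\kappa'_i,s'_i)$ is exactly the $\kappa$-data of the concatenated sequence $\s'\s$. So your proposal is a reconstruction rather than a parallel of the paper's argument, and it is a correct one with a different division of labor: you take as input only the standard-parity case and propagate it through the parity-swap identifications of Section~\ref{sec:id}; the swap-invariance you need is precisely \cite[Proposition 3.6]{HM:2020}, which the paper itself invokes immediately after the proposition, and within this paper it does follow from the last equality of \eqref{eq:currentmn}, as you say. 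Two remarks. First, the ``delicate case analysis'' you deferred closes more cleanly than you feared: the recursion gives the closed formula $\kappa_j=p_j-q_j-\delta_{s_j,1}$, where $p_j,q_j$ count the $+1$'s and $-1$'s among $s_1,\dots,s_j$; hence $\kappa_j^{\tl\s}=\kappa_j^{\s}$ for all $j\ne i,i+1$, while $\kappa_{i+1}=\kappa_i$ for both sequences (adjacent opposite signs) and $\kappa_i^{\tl\s}=\kappa_i^{\s}+\tl s_i$ --- no reference to $s_{i-1}$ or $s_{i+2}$ ever enters. Raising the last equality of \eqref{eq:currentmn} to the power $\tl s_i$ (legitimate, since the $d$-currents commute) and substituting $u\mapsto u-\kappa_i^\s$ yields exactly
\[
\bigl(d_i^{\tl\s}(u-\kappa_i^{\tl\s})\bigr)^{\tl s_i}\bigl(d_{i+1}^{\tl\s}(u-\kappa_{i+1}^{\tl\s})\bigr)^{\tl s_{i+1}}
=\bigl(d_i^{\s}(u-\kappa_i^{\s})\bigr)^{s_i}\bigl(d_{i+1}^{\s}(u-\kappa_{i+1}^{\s})\bigr)^{s_{i+1}},
\]
which is the two-factor identity you wanted, so your scheme goes through. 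Second, one genuine inaccuracy, though inessential to your argument: $\prod_{i\in\bar I}(d_i(u-\kappa_i))^{s_i}$ is the quantum Berezinian of $T(u)$, a series with coefficients in $\YglMN$, and is \emph{not} a ``leading transfer matrix $\fkT_{m+n}(u)$'' --- in the super case the expansion \eqref{eq diff trans} of $\mathfrak D(u)$ is an infinite sum in $e^{-\pa_u}$ with no top term. Since you never use that identification, delete it; for the standard-parity input you need both Nazarov's centrality of the Berezinian \cite{Naz:1991} and Gow's factorization of it into the product of the $d_i$'s \cite{Gow:2007}, which is consistent with the proposition's attribution.
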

According to \cite[Proposition 3.6]{HM:2020}, if we identify super Yangians with different parity sequences as in Section \ref{sec:id}, then the series $\prod_{i\in \bar I}(d_i(u-\kappa_i))^{s_i}$ is independent of the parity sequence $\s$.
\begin{lemma}\label{lem center action}
Let $\la$ be a Young diagram. Then the operator $\prod_{i\in \bar I}(d_i(u-\kappa_i))^{s_i}$ acts on the evaluation $\YglMN$-module $L(\la^\s)$ by the scalar operator
\[
\prod_{i\in \bar I}(d_i(u-\kappa_i))^{s_i}\Big|_{L(\la^\s)}=\prod_{i\in \bar I}\Big(1+\frac{(\la_i^\s,\epsilon_i)}{u-\kappa_i}\Big)^{s_i}=\prod_{(i,j)\in \la}\frac{u+c(i,j)+1}{u+  c(i,j)}.
\]
Similarly, the operator $\prod_{i=1}^{m'+n'+m+n}(d_i(u-\kappa'_i))^{s'_i}$ acts on the evaluation $\mathrm{Y}(\gl_{m'+m|n'+n})$-module $L(\la^{\s'\s})$ by the scalar operator
\beq\label{eq:ber-action-2}
\prod_{i=1}^{m'+n'+m+n}(d_i(u-\kappa'_i))^{s'_i}\Big|_{L(\la^{\s'\s})}=\prod_{(i,j)\in \la}\frac{u+c(i,j)+1}{u+c(i,j)}.
\eeq
\end{lemma}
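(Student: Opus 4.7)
The plan is to exploit the centrality of $\prod_{i\in\bar I}(d_i(u-\kappa_i))^{s_i}$ stated in Proposition \ref{prop center} and compute the resulting scalar by applying the operator to the highest weight vector of $L(\la^\s)$. Since $L(\la^\s)$ is irreducible, central elements act by scalars, so it suffices to determine the value on $v := v_{\la^\s}$.

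To that end, I first claim that $v$ is a highest $\ell^\s$-weight vector for the $\YglMN$-action. Indeed, $v$ satisfies $e_{jk}v=0$ for $1\le j<k\le m+n$ as the singular vector of the $\glMN^\s$-module $L(\la^\s)$, which by the evaluation formula \eqref{eq:evaluation-map} translates into $t_{jk}(u)v=0$ for $j<k$. Combined with the characterization \eqref{eq:t-singular} and the Gauss decomposition of Section \ref{sec:Gauss}, this gives $d_i(u)v = t_{ii}(u)v$ for each $i\in\bar I$. Applying the evaluation map we find $t_{ii}(u)v=(1+s_i\la^\s(e_{ii})u^{-1})v$, and because $(\la^\s,\epsilon_i)=s_i\la^\s(e_{ii})$, this becomes $d_i(u-\kappa_i)v=(1+(\la^\s,\epsilon_i)/(u-\kappa_i))v$. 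Multiplying these identities over $i\in\bar I$ with exponents $s_i$ yields the first equality.

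For the second equality I would argue via parity-independence. By \cite[Proposition 3.6]{HM:2020} (recalled just before the lemma), the series $\prod_{i\in\bar I}(d_i(u-\kappa_i))^{s_i}$ is invariant under the identification of super Yangians for different parity sequences from Section \ref{sec:id}. The underlying polynomial irreducible $\glMN$-module attached to a hook partition $\la$ is likewise parity-independent (only its highest weight $\la^\s$ changes with $\s$), so the scalar is independent of $\s$, and it is enough to verify the identity for the standard parity sequence $\bm{s_0}=(1,\dots,1,-1,\dots,-1)$. In that case $(\la^{\bm{s_0}},\epsilon_i)=\la_i$ and $\kappa_i=i-1$ for $1\le i\le m$, while $(\la^{\bm{s_0}},\epsilon_{m+j})=-\max\{\la_j'-m,0\}$ and $\kappa_{m+j}=m-j$ for $1\le j\le n$. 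A row-by-row telescoping $\prod_{j=1}^{\la_i}(u+j-i+1)/(u+j-i)=(u+\la_i-i+1)/(u-i+1)$ produces the contribution of the boxes in rows $i\le m$, while the remaining factors reassemble column-by-column into the ``leg'' of the hook (rows $i>m$, equivalently columns $j\le\la_{m+1}$), yielding $\prod_{(i,j)\in\la}(u+c(i,j)+1)/(u+c(i,j))$. The second assertion \eqref{eq:ber-action-2} is proved identically, with $\YglMN$ replaced by $\mathrm{Y}(\gl_{m'+m|n'+n}^{\s'\s})$ and $\la^\s$ by $\la^{\s'\s}$.

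The only nontrivial step I anticipate is the telescoping in the hook case: the contributions from the ``corner'' where the diagram bends between its arm (rows $\le m$) and its leg (rows $>m$) must be matched carefully, taking account of the cutoff $\max\{\la_j'-m,0\}$ in \eqref{eq natural}. Everything else reduces cleanly to the evaluation formula, the Gauss decomposition, and Schur's lemma.
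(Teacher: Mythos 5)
Your proposal is correct and follows essentially the same route as the paper: invoke the centrality from Proposition \ref{prop center}, apply Schur's lemma, and read off the scalar by acting on the highest $\ell$-weight vector via the evaluation map, Gauss decomposition, and \eqref{eq:t-singular}. The paper leaves the resulting rational-function identity as a ``direct computation''; your reduction to the standard parity sequence via the parity-independence of $\prod_{i\in\bar I}(d_i(u-\kappa_i))^{s_i}$, followed by the row/column telescoping, is a valid and cleanly organized way to carry it out.
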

\begin{proof}
The statement follows from Proposition \ref{prop center} and direct computations on highest $\ell$-weight vector.
\end{proof}

\begin{proof}[{Proof of Theorem} \ref{thm character}]
The proof of the first statement is similar to \cite[Lemma 2.1]{NT:1998} and \cite[Lemma 4.7]{FM:2002}. We sketch the proof following the exposition of \cite[Corollary 8.5.8]{Molev:2007}. 

We first show it for the case when $m'+n'=0$. Then $L(\la/\mu)$ is indeed the evaluation module $L(\la^\s)$. For a semi-standard Young $\s$-tableau $\mc T$, denote by $\mc T_k$ the sub-tableau consisting of all boxes occupied by elements $\mathfrak a_1,\dots,\mathfrak a_k$ for $k=0,1,\dots,m+n$. We have $\varnothing =\mc T_0\subset\mc T_1\subset \mc T_2\subset \cdots\subset \mc T_{m+n}=\mc T$. Moreover, $\mc T$ is uniquely determined by the data $(\mc T_1,\dots,\mc T_{m+n})$. The $\glMN$-module $L(\la^\s)$ has a basis $\{v_{\mc T}\}$ indexed by semi-standard Young $\s$-tableaux of shape $\la$ which is called the Gelfand-Tsetlin basis and is obtained from the branching rule of $\glMN$, see \cite{BR:1987} for the standard parity sequence and \cite{CPT:2015} for arbitrary parity sequences. Using the properties of Gelfand-Tsetlin basis and Lemma \ref{lem center action}, we obtain that
\beq\label{eq:111}
(d_k(u-\kappa_k))^{s_k}\ v_{\mc T}=\prod_{(i,j)\in \mc T_{k}/\mc T_{k-1}}\frac{u+c(i,j)+1}{u+c(i,j)},
\eeq
completing the proof in the case $m'+n'=0$. In the skew case namely $m'+n'>0$, we use \eqref{eq:ber-action-2} and Lemma \ref{eq psi map}. Then formulas \eqref{eq:111} remain valid as well. Note the shift automorphism $\tau_{m'-n'}$ in the definition of skew representations.

The second statement follows from the fact that different semi-standard Young $\s$-tableaux $\mathcal T$ of the same shape correspond to different $\ell$-weights. Indeed, the data $(\mc T{(i,j)}, c(i,j))$ determine the semi-standard Young $\s$-tableau uniquely, since the content (the second component of the pair) tells us which diagonal it belongs to, and on the same diagonal the elements $\mathfrak a_{\mc T(i,j)}$ occupying the boxes strictly increase.
\end{proof}

\begin{proposition}\label{prop iso}
The skew representation $L(\la/\mu)$ is independent of the choice of parity sequences. In particular, it is irreducible.
\end{proposition}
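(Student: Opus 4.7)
The strategy is to exploit the explicit $q$-character from Theorem \ref{thm character} together with the odd reflection transition rule from Theorem \ref{thm:mn}. First, I would identify a highest $\ell^\s$-weight vector of $L(\la/\mu)$. Among all semi-standard Young $\s$-tableaux of shape $\la/\mu$ there is a canonical ``top'' tableau $\mc T^\s_{\mathrm{top}}$, in which each box $(i,j)$ is filled with the smallest element of $A$ under the order $<^\s$ compatible with the semi-standard constraints. By the $q$-character formula and thinness of $L(\la/\mu)$, the associated $\ell^\s$-weight
\[
\bm\zeta^\s_{\la/\mu}:=\prod_{(i,j)\in \la/\mu}\mathscr X_{\mc T^\s_{\mathrm{top}}(i,j),\,c(i,j)}
\]
is maximal with respect to the partial order on $\ell^\s$-weights induced by the simple $\ell$-roots. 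By Lemma \ref{lem:affineroot}, the (unique up to scalar) vector in the one-dimensional weight space $L(\la/\mu)_{\bm\zeta^\s_{\la/\mu}}$ is annihilated by every $e_i^\s(u)$ and is therefore a highest $\ell^\s$-weight vector; moreover it generates $L(\la/\mu)$, since every other $\ell^\s$-weight occurring in the $q$-character is obtained from $\bm\zeta^\s_{\la/\mu}$ by successive applications of inverses of simple $\ell$-roots.

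Second, I would verify the combinatorial compatibility needed for the odd reflection. For adjacent parity sequences $\s$ and $\tl\s$ differing by transposing $s_i$ and $s_{i+1}$ with $s_i\ne s_{i+1}$, the highest $\ell$-weights $\bm\zeta^\s_{\la/\mu}$ and $\bm\zeta^{\tl\s}_{\la/\mu}$ should satisfy the transition rule \eqref{eq:tranmn}. The top tableaux $\mc T^\s_{\mathrm{top}}$ and $\mc T^{\tl\s}_{\mathrm{top}}$ differ only in the boxes whose entries are of the $i$-th or $(i+1)$-th type, and a direct case-by-case enumeration of the $\mathscr X$-factors on those boxes recovers exactly the ratios $\psi_i^\s(u-s_i)/\psi_i^\s(u)$ and $\varphi_i^\s(u-s_i)/\varphi_i^\s(u)$ dictated by \eqref{eq:tranmn}, where $\varphi_i^\s$ and $\psi_i^\s$ are the coprime monic polynomials realizing $\zeta_i^\s(u)/\zeta_{i+1}^\s(u)$ as in \eqref{eq:rat-cond}.

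Finally, the argument concludes as follows: for the standard parity sequence $\s_0$ the module $L(\la/\mu)$ is the classical super skew representation, known to be irreducible with highest $\ell^{\s_0}$-weight $\bm\zeta^{\s_0}_{\la/\mu}$ (cf.\ \cite{CPT:2015} and the even case \cite{NT:1998,Molev:2007}). Connecting $\s_0$ to an arbitrary $\s$ (and independently $\s'_0$ to an arbitrary $\s'$) by a chain of adjacent odd transpositions, iterating Theorem \ref{thm:mn}, and using the identification of super Yangians from Section \ref{sec:id} shows that $L(\la/\mu)$ built from any admissible pair of parity sequences is the unique finite-dimensional irreducible highest $\ell$-weight $\YglMN$-module with highest $\ell$-weight $\bm\zeta^\s_{\la/\mu}$. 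This simultaneously yields irreducibility and parity-sequence independence. The main technical obstacle is the combinatorial verification in the second paragraph: although it is local to the swapped positions $i,i+1$, it requires keeping careful track of how entries and contents of boxes of $\la/\mu$ redistribute when the order $<^\s$ is altered.
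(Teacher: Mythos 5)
Your route through $q$-characters and odd reflections is genuinely different from the paper's, and it could in principle be completed, but as written it has two real gaps. First, the cyclicity claim in your opening paragraph does not follow from what you cite: knowing that every $\ell^\s$-weight occurring in $\chi(L(\la/\mu))$ lies below $\bm\zeta^\s_{\la/\mu}$ in the partial order says nothing about the module being generated by the top vector --- a direct sum $L(\bm\zeta^\s_{\la/\mu})\oplus L(\bm\xi)$ with $\bm\xi$ strictly smaller has exactly the same property. The way to close this is a dimension count you never make: the submodule generated by the top singular vector surjects onto the irreducible $L(\bm\zeta^{\tl\s}_{\la/\mu})$, and iterating Theorem~\ref{thm:mn} from the standard parity sequence (where irreducibility of the skew module is already known) shows that $\dim L(\bm\zeta^{\tl\s}_{\la/\mu})$ equals the number of semi-standard tableaux, i.e.\ $\dim L(\la/\mu)$; only then do generation and irreducibility follow. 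Second, the whole argument hinges on the combinatorial identity you defer to your second paragraph --- that the top-tableau $\ell$-weights for $\s$ and $\tl\s$ are related by \eqref{eq:tranmn}, which requires extracting the coprime polynomials $\varphi_i^\s,\psi_i^\s$ from tableau data. You flag this as the main obstacle and do not carry it out, so the proof is not complete. (There is also the dependence on $\s'$, which the $\YglMN$ odd reflections do not address at all.)

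For contrast, the paper avoids all of this. By the quasi-determinant formula \eqref{eq:psi t}, the composite $\pi_{m'+m|n'+n}\circ\psi_{m'|n'}$ sends $t_{ab}^{\s}(u)$ and $t_{\sigma(a)\sigma(b)}^{\tl\s}(u)$ to the \emph{same} element of $\mathrm{U}(\gl_{m'+m|n'+n})$ once the two ambient Lie superalgebras (and hence the modules $L(\la^{\s'\s})$ and $L(\la^{\s'\tl\s})$) are identified via the relabeling $e_{ab}\mapsto e_{\tl\sigma(a)\tl\sigma(b)}$. So the two $\YglMN$-module structures on the subspace $L(\la/\mu)$ coincide on the nose, and irreducibility is simply inherited from the known standard-parity case. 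No $q$-characters, odd reflections, or tableau combinatorics are needed. Your approach can be salvaged with the dimension argument above plus an honest verification of the tableau identity, but it is considerably longer than necessary.
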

\begin{proof}
Let $\s$ be a parity sequence in $S_{m|n}$. Fix $i\in I$ and assume that $s_i\ne s_{i+1}$. Let $\tl\s$ be the parity sequence in $S_{m|n}$ obtained by switching $s_i$ and $s_{i+1}$. Let $\sigma$ and $\tl\sigma$ be the simple permutation $(i,i+1)$ and $(m'+n'+i,m'+n'+i+1)$ in the symmetric group $\mathfrak S_{m+n}$ and $\mathfrak S_{m'+m|n'+n}$, respectively. Then the Lie superalgebras $\gl_{m'+m|n'+n}^{\s'\s}$ and $\gl_{m'+m|n'+n}^{\s'\tl\s}$ are isomorphic via the map $e_{ab}^{\s'\s}\mapsto e_{\tl\sigma(a)\tl\sigma(b)}^{\s'\tl\s}$. Moreover, we identify these two Lie superalgebras under this isomorphism, the modules $L(\la^{\s'\tl\s})$ and $L(\la^{\s'\s})$ can be identified as the same $\gl_{m'+m+n'+n}$-module, see e.g. \cite[Section 2.4]{CW:2012}. The first statement follows from the equality
\[
\pi_{m'+m|n'+n}^{\s'\s}\circ \psi_{m'|n'}^{\s}(t_{ab}^{\s}(u))=\pi_{m'+m|n'+n}^{\s'\tl\s}\circ \psi_{m'|n'}^{\tl\s}(t_{\sigma(a)\sigma(b)}^{\tl\s}(u))
\]
which clearly follows from \eqref{eq:psi t}.

Now the second statement follows from the the corresponding statement for the skew representations associated to the standard parity sequence, see \cite[Theorem 4.9]{LM:2020} or \cite[Theorem 3.2]{L:2021a}.
\end{proof}

\end{document}